 \newtheorem{thm}{Theorem}[section]
 \newtheorem{lem}[thm]{Lemma}
 \newtheorem{cla}[thm]{Claim}
 \theoremstyle{definition}
 \theoremstyle{remark}\newtheorem{remark}[thm]{Remark}
 \newtheorem{ex}[thm]{Example}
 \numberwithin{equation}{section}
\renewcommand{\labelenumi}{(\roman{enumi})}
 \newcommand{\RR}{\ensuremath{\mathbb{R}}} 
 \newcommand{\LL}{\ensuremath{\mathbb{L}}}
 \newcommand{\ltensor}{\overset{\LL}{\otimes}}
 \newcommand{\Aut}{\ensuremath{\operatorname{Aut}}}
 \newcommand{\Pic}{\ensuremath{\operatorname{Pic}}}
 \newcommand{\Spec}{\ensuremath{\operatorname{Spec}}}
 \newcommand{\Coker}{\ensuremath{\operatorname{Coker}}}
 \newcommand{\Image}{\ensuremath{\operatorname{Im}}}
 \newcommand{\td}{\ensuremath{\operatorname{td}}}
  \newcommand{\ch}{\ensuremath{\operatorname{ch}}}
 \newcommand{\rk}{\ensuremath{\operatorname{rk}}}
 \newcommand{\order}{\ensuremath{\operatorname{ord}}}
 \newcommand{\id}{\ensuremath{\operatorname{id}}}
 \newcommand{\wt}{\widetilde}
 \newcommand{\FM}{\operatorname{FM}}
 \newcommand{\mc}{\mathcal}
 \newcommand{\mb}{\mathbb}
 \newcommand{\rom}{\textrm}
 \newcommand{\wh}{\widehat}
 \newcommand{\fm}[1]{\ensuremath{\Phi^{#1}}}
\DeclareFontFamily{U}{UWCyr}{}
\DeclareFontShape{U}{UWCyr}{m}{n}{
   <5> <6> <7> <8> <9> gen * wncyr
   <10> <10.95> <12> <14.4> <17.28> <20.74> <24.88> wncyr10
   }{}
\DeclareMathAlphabet\cy{U}{UWCyr}{m}{n}
\newcommand{\Gal}{\operatorname{Gal}}
\newcommand{\perf}{\operatorname{perf}}
\newcommand{\PP}{\mathbb P}
\newcommand{\Z}{\mathbb Z}
\newcommand{\Q}{\mathbb Q}
\newcommand{\C}{\mathbb C}
\def\im{\mathop{\mathrm{im}}\nolimits}
\def\ker{\mathop{\mathrm{ker}}\nolimits}
\title{A counterexample of 
the birational Torelli problem via Fourier--Mukai transforms}  
\author{Hokuto Uehara}
\date{}
\begin{document}
\maketitle

\begin{abstract}
We study the Fourier--Mukai numbers of rational elliptic 
surfaces. As its application, we give an example of 
a pair of minimal  $3$-folds with 
Kodaira dimensions $1$, $h^1(\mc O)=h^2(\mc O)=0$ such that
they are mutually derived equivalent, deformation equivalent,
but not birationally equivalent. It also supplies a counterexample of 
the birational Torelli problem.  
\end{abstract}

%=======================================================================
%=======================================================================
 \section{Introduction}\label{section:introduction}
%=======================================================================
%=======================================================================

Let $X$ be a smooth projective variety over $\C$. The derived category $D^b(X)$ of $X$ is a triangulated category
whose objects are bounded complexes of coherent sheaves on $X$.
A \emph{Fourier--Mukai transform} relating smooth projective varieties 
$X$ and $Y$ is a $\C$-linear equivalence
of triangulated categories $\Phi: D^b(X)\to D^b(Y)$.
If there exists a Fourier--Mukai 
transform relating $X$ and $Y$, we call $X$ a
\emph{Fourier--Mukai partner} of $Y$, or simply say 
that $X$ and $Y$ are 
\emph{derived equivalent}.

It is an interesting problem to find a good characterization of  
Fourier-Mukai partners of given smooth projective varieties. 
For instance, it is known that 
two K$3$ surfaces are derived equivalent 
if and only if their Mukai lattices are Hodge isometric to each other 
(\cite{Or96}). We also have a moduli-theoretic characterization of  
Fourier-Mukai partners of certain minimal elliptic surfaces
due to Bridgeland and Maciocia
(see Theorem \ref{BMelliptic}).

 Furthermore it is also interesting to study the cardinality
 of the set of  
 isomorphism classes of Fourier--Mukai partners of $X$ (this set is often denoted by 
 $\FM (X)$ and its cardinality is called \emph{Fourier--Mukai number} of $X$).
 Although it is predicted that the Fourier--Mukai numbers of 
any smooth projective varieties are finite 
(\cite{BM01}, \cite{Ka02}, \cite{To06}),
it is known that there are no universal bounds of Fourier--Mukai numbers 
for the
families of K$3$ surfaces, abelian varieties, and rational  elliptic 
surfaces respectively
(\cite{Og02}, \cite{Or02}, \cite{HLOY03}, \cite{Ue04}).

%%%%%%%%%%%%%%%%%%%%%%%%%%%%%%%%%%%%%%%%%%%%%%%%%%%%%%%%%%%%%%%%%%%%%%%%%%%
%%%%%%%%%%%%%%%%%%%%%%%%%%%%%%%%%%%%%%%%%%%%%%%%%%%%%%%%%%%%%%%%%%%%%%%%%%%

\paragraph{Fourier--Mukai numbers of
rational  elliptic surfaces.}
In this article, we study the Fourier--Mukai numbers of
rational elliptic surfaces over $\C$.
Henthforth we consider only  
relatively minimal elliptic surfaces as elliptic surfaces.

Fix a rational elliptic surface
$\pi_0 \colon B\to \PP^1$ with a section and
a point $s\in\PP^1$, where
the fiber of $\pi_0$ over $s$ is of type $\rom{I}_n$ $(n\ge 0)$.
Choose an integer $m>1$ and 
apply a \emph{logarithmic transformation} to $B$ along $s$,
and then we obtain a rational  elliptic surface 
$\pi_1 \colon S\to \PP^1$ with 
a multiple fiber of type $_m\rom{I}_n$ over $s$
(cf.~Remark \ref{rem:rationalBS}). 
Furthermore it is known that every rational  elliptic 
surface $S$ has at most one multiple fiber,
and it is obtained  
by applying a logarithmic transformation to its Jacobian $J(S)$, 
which is again rational.

We have a bound of the Fourier--Mukai numbers of $S$
from below as follows.
In the statement, we denote by $\varphi(m)$ the Euler function.
 
%%%
%%%
%%%

\begin{thm}\label{thm:FMelliptic0}
Fix $B$ and $s\in \PP^1$ as above.
Take an integer $m>1$. 
Then we have a positive integer $n_0$, depending on $B$ and $s$, 
but not depending on $m$, satisfying
$$
\frac{\varphi (m)}{n_0} 
\leqq 
|\FM (S)|
$$ 
for any rational  elliptic surfaces 
$S$ obtained from $B$ and $m$
by a logarithmic transformation along the point $s$.
Consequently the Fourier--Mukai number of $S$ becomes larger
as we take a larger $m$.
\end{thm}

We remark that if $S$ has no multiple fiber  
or $S$ has a multiple fiber with multiplicity $2$, then 
we readily know that $|\FM (S)|=1$ (see Remark \ref{rem:FM1}). 
 
We can apply our method to compute the Fourier--Mukai 
numbers of certain rational  elliptic surfaces (see \S 
\ref{subsec:examples}).
Theorem \ref{thm:FMelliptic0} also produces
counterexamples to Kawamata's D-K conjecture (\cite{Ka02}) as in \cite{Ue04}.

%%%%%%%%%%%%%%%%%%%%%%%%%%%%%%%%%%%%%%%%%%%%%%%%%%%%%%%%%%%%%%%%%%%%%%%%%%%
%%%%%%%%%%%%%%%%%%%%%%%%%%%%%%%%%%%%%%%%%%%%%%%%%%%%%%%%%%%%%%%%%%%%%%%%%%%

\paragraph{Minimal  $3$-folds and the birational Torelli problem.}
The second aim of this article is to study certain minimal  $3$-folds
in the contexts of derived categories and the Torelli problem. 
Let us consider the fiber products $X$ of 
two rational  elliptic surfaces over $\PP ^1$
with some properties.
Using Theorem \ref{thm:FMelliptic0}, 
we study Fourier--Mukai numbers of such $3$-folds $X$.
The precise statement is the following.

%%%
%%%
%%%

\begin{thm}\label{thm:main20}
Let $N$ be a given positive integer.
Then there are smooth minimal  $3$-folds $X_i$ $(i=1,\ldots ,N)$ 
satisfying the following properties: 
\begin{enumerate}
\item
For all $i$, $\kappa (X_i)=1$ and $X_i$'s have the following 
Hodge diamond:
$$
\begin{array}{ccccccccc}
 &  &  & 1&  &  &  &  & \\
 &  & 0&  & 0&  &  &  & \\
 & 0&  &19&  & 0&  &  & \\
1&  &19&  &19&  & 1&  & \\
 & 0&  &19&  & 0&  &  & \\
 &  & 0&  & 0&  &  &  & \\
 &  &  & 1&  &  &  &  & \\ 
\end{array}
$$
\item
$X_i$ and $X_j$ are not birationally 
equivalent for $i\ne j$.
\item
All $X_i$'s are mutually deformation equivalent 
and derived equivalent.
\item
For all $i,j$, we have Hodge isometries
$$
(H^3(X_i,\Z)_{\text{free}},Q_{X_i})\cong (H^3(X_j,\Z)_{\text{free}},Q_{X_j}),
$$
where the polarizations are given by the intersection forms.
\end{enumerate}

In particular, they supply a counterexample to the birational 
Torelli problem.
\end{thm}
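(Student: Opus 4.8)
The plan is to realize the $X_i$ as fiber products of rational elliptic surfaces over $\PP^1$ and to lift the abundance of Fourier--Mukai partners furnished by Theorem \ref{thm:FMelliptic0} from surfaces to threefolds. Fix $B$, $s$ and an $\mathrm{I}_n$ fiber as in that theorem, and choose the multiplicity $m$ so large that $\varphi(m)/n_0\ge N$; the log transform $S$ of $B$ at $s$ then has pairwise non-isomorphic Fourier--Mukai partners $S_1,\dots,S_N$, all rational elliptic surfaces with a multiple fiber of multiplicity $m$ over $s$ and sharing the Jacobian fibration $B\to\PP^1$. After fixing an auxiliary rational elliptic surface $T\to\PP^1$ whose critical values are disjoint from those of $B$ and which is smooth over $s$ (so that each fiber product is smooth and relatively minimal), I set $X_i:=S_i\times_{\PP^1}T$. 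The first task is bookkeeping. Using $\omega_{X_i}=p^\ast\omega_{S_i}\otimes q^\ast\omega_T\otimes g^\ast\omega_{\PP^1}^{-1}$ for the two projections $p,q$ and the structure map $g\colon X_i\to\PP^1$, together with the canonical bundle formula for elliptic surfaces, one finds that $K_{X_i}$ is an effective combination of multiple fibers, numerically a positive multiple of a fiber of $g$; hence $K_{X_i}$ is nef, $X_i$ is a smooth minimal model, and $\kappa(X_i)=1$. The Hodge diamond, in particular the entries $19$ and $h^{3,0}=1$, follows from the Leray spectral sequence of the abelian-surface fibration $g$.

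For item (iii), derived equivalence is obtained by relativizing the surface transforms. By Theorem \ref{BMelliptic}, the equivalence $D^b(S_i)\to D^b(S_j)$ is a relative Fourier--Mukai transform over $\PP^1$ whose kernel is supported on the fiber product $S_i\times_{\PP^1}S_j$. Combining this kernel with the structure sheaf of the relative diagonal of $T$ over $\PP^1$ produces a complex on $X_i\times X_j$ supported on $S_i\times_{\PP^1}S_j\times_{\PP^1}T$, and a standard base-change and convolution argument (as for products of elliptic fibrations) shows that the induced functor $D^b(X_i)\to D^b(X_j)$ is an equivalence. Deformation equivalence follows because the surfaces $S_i$, differing only by the discrete gluing datum of the log transform, all lie in one connected family of rational elliptic surfaces with the prescribed singular-fiber configuration; taking fiber products with $T$ over this family exhibits the $X_i$ as fibers of a single smooth family.

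Item (iv) I would read off from the same Leray analysis. The variable part of the weight-$3$ Hodge structure $H^3(X_i,\Z)_{\text{free}}$ is controlled by $H^1(\PP^1,R^2g_\ast)$, whose relevant summand is built from the variations of Hodge structure of $R^1$ of the two elliptic fibrations. Since every $S_i$ has the same Jacobian and the same $j$-invariant function, these variations---and hence the transcendental lattices and the intersection forms---agree for all $i$, while the remaining algebraic part is combinatorially the same. This yields the desired Hodge isometries and makes precise the point that the datum distinguishing the partners is invisible to $H^3$, which is exactly what a counterexample to the birational Torelli problem requires.

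The genuine obstacle is item (ii), non-birationality. Because $\kappa(X_i)=1$ and the generic fiber of $g$ is a surface of Kodaira dimension $0$ containing no rational curves, the Iitaka fibration is birationally identified with $g\colon X_i\to\PP^1$, and this identification is a birational invariant. Hence any birational map $X_i\dashrightarrow X_j$ descends to an automorphism of $\PP^1$ and restricts on the generic fiber to a birational map of smooth projective surfaces containing no rational curves, which is therefore biregular. The strategy is then to extract from the multiple fiber of $g$ over $s$ a discrete invariant that records the twisting parameter in $(\Z/m)^\times$ separating the partners $S_i$, and to show that a fibration-preserving birational map would force these parameters to coincide modulo the finite equivalence under which the $S_i$ were counted distinct. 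The crux---and the step I expect to be hardest---is the local analysis near $s$ showing that this invariant of the germ of the abelian-surface fibration genuinely separates the finitely many partners and cannot be absorbed by a translation in the generic fiber.
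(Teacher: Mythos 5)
Your construction and your treatment of items (i) and (iii) follow the paper: you take $X_i=S_i\times_{\PP^1}T$ with $S_1,\dots,S_N$ the Fourier--Mukai partners supplied by Theorem \ref{thm:FMelliptic0}, get $\kappa=1$ from adjunction, and relativize the surface equivalences exactly as in Lemmas \ref{lem:rank2} and \ref{lem:product}. But the two items you yourself flag as delicate are genuine gaps, and in both cases the paper argues differently. For (ii) you leave the crux open, and the route you sketch is not just unfinished but goes the wrong way: the germ of $g$ at the multiple fiber determines the twisting class $k\xi_s\in WC(B_s)$ only up to automorphisms of the \emph{germ} of the Jacobian fibration, a group which is in general larger than (the image of) $\Aut_0(B,s)$, whereas the partners $S_i$ are pairwise non-isomorphic only with respect to the finer \emph{global} equivalence (\ref{eq:I(1)}); so globally distinct partners may perfectly well have isomorphic germs over $s$, and no invariant of the germ can be expected to separate them. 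The paper avoids local analysis entirely: it shows $\rho(X_i/\PP^1)=2$, so the Iitaka fibration $f_i$ factors only through $S_i$ or $S$, whence an isomorphism $X_i\cong X_j$ would force $S_i\cong S_j$; it then shows $X_i$ admits no small contraction (any curve with $K_{X_i}\cdot C=0$ is vertical, contradicting $\rho(X_i/\PP^1)=2$), so no flops exist, and concludes by the fact that birational minimal $3$-folds are connected by a sequence of flops.

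For (iv), reading the \emph{integral} polarized Hodge isometry off the Leray spectral sequence and the agreement of the variations of Hodge structure is precisely what the paper says it does not know how to do: Remark \ref{rem:Chakiris} states that Lemma \ref{lem:hodge} is not known without using derived equivalence. Agreement of the $\Q$-local systems (same Jacobian, same $j$-map) gives no control over the lattices $H^3(X_i,\Z)_{\text{free}}$ with their intersection forms, since the integral structure involves the contributions of the singular and multiple fibers. The paper's proof runs through the Fourier--Mukai transform: the induced cohomological correspondence restricts to a Hodge isometry of $H^3(-,\C)$, and the whole point is the integrality statement $\frac{1}{2}c_3(\mc U)^{3,3}\in H^6(X_i\times X_j,\Z)$, obtained by choosing the kernel $\mc U=\iota_*(\mc P_0\boxtimes\mc O_{\Delta_S})$ with $\mc P_0$ of rank $2$ (Lemma \ref{lem:rank2}) and applying Grothendieck--Riemann--Roch; this parity argument has no counterpart in your proposal. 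Finally, you omit a hypothesis the paper needs throughout: the generic fibers of $\pi_i$ and $\pi$ must be non-isogenous. That assumption is what makes Namikawa's argument compute $\Pic X_i$ and give $h^{1,1}=19$; without it the Hodge diamond in (i) is unjustified (isogenies between fibers produce extra divisor classes on the fiber product), and the cleanest repair of your item (ii) — splitting an isomorphism of generic fibers into its two factors — is also unavailable.
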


%%%
%%%
%%%

Bridgeland \cite{Br02} shows that
two smooth projective $3$-folds connected by a sequence of flops
are derived equivalent.
Consequently birationally equivalent 
smooth minimal  $3$-folds are derived 
equivalent. Motivated by his result,
 Borisov and C\u{a}ld\u{a}raru in \cite{BC09} show
 that there is a pair of 
Calabi--Yau $3$-folds such that they are derived equivalent but 
not birationally equivalent. Our theorem assures that 
a similar phenomenon happens for the case of Kodaira dimension $1$.
Furthermore in \cite{Ca07} (see also \cite[Conjecture 0.2]{Sz04}), 
C\u{a}ld\u{a}raru attempts to construct counterexamples to the birational 
Torelli problem for  Calabi--Yau $3$-folds. 
Another counterexample is discovered by Namikawa in \cite{Na02} 
for irreducible symplectic manifolds.
Our result says that the birational 
Torelli problem fails for the above minimal $3$-folds.

The Iitaka fibrations of the above $3$-folds $X_i$'s   
have multiple fibers. So the failure of the birational 
Torelli problem may not be very surprising because 
a similar phenomenon occurs for the $2$-dimensional case (\cite{Ch80}, 
see also Remark \ref{rem:Chakiris}).

%%%%%%%%%%%%%%%%%%%%%%%%%%%%%%%%%%%%%%%%%%%%%%%%%%%%%%%%%%%%%%%%%%%%%%%%%%%
%%%%%%%%%%%%%%%%%%%%%%%%%%%%%%%%%%%%%%%%%%%%%%%%%%%%%%%%%%%%%%%%%%%%%%%%%%%

\paragraph{Construction of this article.}
In \S \ref{section:elliptic surfaces}, first we recall some general facts of 
Fourier--Mukai partners of elliptic surfaces and 
the Ogg--Shafarevich theory. After that we give the proof of 
Theorem \ref{thm:FMelliptic0}.
In \S \ref{section:3folds}, we first show some easy lemmas on 
Fourier--Mukai transforms between varieties of fiber products.
Secondly by taking the fiber products of 
rational  elliptic surfaces with certain properties,
we construct minimal  $3$-folds in Theorem \ref{thm:main20}.

\paragraph{Acknowledgments.}
The referee pointed out some mistakes and simplifications 
of the original proof.
I would like to thank the referee for invaluable suggestions.
I am supported by the Grants-in-Aid 
for Scientific Research (No.20740022).

%%%%%%%%%%%%%%%%%%%%%%%%%%%%%%%%%%%%%%%%%%%%%%%%%%%%%%%%%%%%%%%%%%%%%%%%%%%
%%%%%%%%%%%%%%%%%%%%%%%%%%%%%%%%%%%%%%%%%%%%%%%%%%%%%%%%%%%%%%%%%%%%%%%%%%%

\paragraph{Notations and conventions.}
All varieties are defined over $\C$,
and \emph{elliptic surface}
always means \emph{relatively minimal elliptic surface}.

A point on a variety means a closed point unless specified
 otherwise.
  
Let $\pi\colon X\to Y$  
be a surjective projective morphism
between smooth projective varieties $X$ and $Y$. For a point $t\in Y$, 
we denote the scheme-theoretic fiber of $t$ by $X_t$ and
the discriminant locus of $\pi$ by $\Delta(\pi)$.

We denote the diagonal with reduced structure 
in $X\times X$ by $\Delta _X$.

Let $\pi\colon B\to C$ be an elliptic 
surface with the $0$-section
and $s$ a point on $C$. 
We denote by $\Aut _0 B$  the group  
consisting of the automorphisms $\gamma$ of $B$ 
which fix the $0$-section as a curve,
and make the following diagram commutative for 
some automorphisms $\delta$ of $C$:
\[ \xymatrix{ B \ar[d]_{\pi} \ar[r] ^{\gamma} & B \ar[d]^{\pi} \\  C \ar[r]^{\delta} & C}\]
Furthermore $\Aut _0(B,s)$ (resp.~$\Aut _0(B/C)$ ) 
is the group consisting of $\gamma\in \Aut _0B$
which induces $\delta\in \Aut C$
 fixing the point $s\in C$
(resp.~all points in $C$).

 For a set $I$, we denote by $|I|$ the cardinality of $I$.

%========================================================================
%========================================================================
\section{Rational  elliptic surfaces}\label{section:elliptic surfaces}
%========================================================================
%========================================================================

\subsection{Fourier--Mukai partners of elliptic surfaces}
\label{sub:FMellipitic}

We need some standard notation and results before going further.
Let $\pi:S\to C$ be an elliptic surface.
For an object $E$ of $D^b(S)$, we define the fiber degree of $E$ 
\[d(E)=c_1(E)\cdot f, \]
where $f$ is a general fiber of $\pi$. Let us denote by $\lambda_{S/C}$  
the highest common factor of the fiber degrees of objects of $D^b(S)$. 
Equivalently,
$\lambda_{S/ C}$ is the smallest number $d$ such that there is a 
holomorphic $d$-section of $\pi$. 
For integers $a>0$ and $i$ with $i$ coprime to $a\lambda_{S/ C}$, 
by \cite{Br98} there exists a smooth,
2-dimensional component $J_S (a,i)$ of the moduli space of pure dimension 
one stable sheaves on $S$,
the general point of which represents a rank $a$, degree $i$ stable 
vector bundle supported on a smooth fiber of $\pi$. 
There is a natural morphism $J_S (a,i)\to C$, taking a point representing
 a sheaf supported on the
fiber $\pi ^{-1}(x)$ of $S$ to the point $x$. This morphism is a minimal 
elliptic fibration (\cite{Br98}).
Put $J^i(S):=J_S(1,i)$. 
Obviously, $J^0(S)\cong J(S)$, the Jacobian surface associated to $S$, 
and $J^1(S)\cong S$. 
Moreover there is a natural isomorphism $J^i(S)\cong J^{i+\lambda_{S/C}}(S)$.
Hence we may regard $i$ as an element of $\Z/\lambda_{S/C}\Z$, 
instead of $\Z$,
when we consider the isomorphism classes of $J^i(S)$. 
We have a nice characterization of 
Fourier--Mukai partners of elliptic surfaces with non-zero Kodaira dimensions:

%%%%%%%%%
%%%%%%%%%
%%%%%%%%%

\begin{thm}[Proposition 4.4 in \cite{BM01}]\label{BMelliptic}
Let $\pi :S\to C$ be an elliptic surface and $T$ a smooth projective variety.
Assume that the Kodaira dimension $\kappa (S)$ is non-zero.
Then the following are equivalent.
\renewcommand{\labelenumi}{(\roman{enumi})}
\begin{enumerate}
\item 
$T$ is a Fourier--Mukai partner of $S$. 
\item 
$T$ is isomorphic to $J^b(S)$ for some integer $b$ with $(b,\lambda _{S/C})=1$. 
\end{enumerate}
\end{thm}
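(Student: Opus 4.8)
The plan is to prove the two implications separately; the easy direction is (ii)$\Rightarrow$(i). Assume $T\cong J^b(S)=J_S(1,b)$ with $(b,\lambda_{S/C})=1$. The coprimality is precisely what guarantees the existence of a relative universal (Poincar\'e) sheaf $\mathcal{P}$ on $S\times_C J^b(S)$, viewed inside $S\times J^b(S)$. I would then consider the integral functor $\Phi_{\mathcal{P}}\colon D^b(S)\to D^b(J^b(S))$ with kernel $\mathcal{P}$ and invoke Bridgeland's relative criterion: restricted to a smooth fibre, $\Phi_{\mathcal{P}}$ is the classical Fourier--Mukai autoequivalence of an elliptic curve relating $E$ to its moduli of rank-$1$, degree-$b$ sheaves, which is an equivalence. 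A standard base-change and spanning-class argument then upgrades this fibrewise equivalence to a global one, so $J^b(S)$ is a Fourier--Mukai partner of $S$.

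For (i)$\Rightarrow$(ii), fix an equivalence $\Phi\colon D^b(T)\to D^b(S)$. First I would record the derived invariants: $T$ is again a smooth projective surface with $\dim T=\dim S=2$ and $\kappa(T)=\kappa(S)\neq 0$, since dimension, the graded canonical ring and the Serre functor are all preserved by $\Phi$. The crucial structural input is that, when $\kappa(S)\neq 0$, the elliptic fibration $\pi\colon S\to C$ is \emph{intrinsic}: it is read off from the pluri- or anti-canonical linear systems, which are determined by the canonical ring and hence transported by $\Phi$. I would use this to show that $T$ carries an elliptic fibration over the same base $C$ and that $\Phi$ is compatible with the two fibrations, sending objects supported on a fibre of $T\to C$ to objects supported on the corresponding fibre of $S\to C$. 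Concretely this rests on classifying the point-like objects $\Phi(\mathcal{O}_t)$ for $t\in T$: the hypothesis on $\kappa$ rigidifies the Serre functor enough to force these images to be shifts of stable sheaves concentrated on single fibres.

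With the fibration transported, the family $\{\Phi(\mathcal{O}_t)\}_{t\in T}$ exhibits $T$ as a two-dimensional component of a relative moduli space of fibrewise-stable sheaves on $S$, that is $T\cong J_S(a,i)$ for some $a>0$ and $i$ with $(i,a\lambda_{S/C})=1$, the coprimality coming from fibrewise stability. Finally I would reduce the invariants $(a,i)$ to $(1,b)$: on each smooth fibre the relative Fourier--Mukai transforms act on the pair $(\rk,d)$ through $\operatorname{SL}_2(\Z)$, so applying a suitable sequence of such transforms, which are equivalences over $C$, identifies $J_S(a,i)$ with $J_S(1,b)=J^b(S)$ over $C$, where $b$ is well defined modulo $\lambda_{S/C}$ and satisfies $(b,\lambda_{S/C})=1$. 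This yields $T\cong J^b(S)$, as required.

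The main obstacle is the second step: establishing that the equivalence is forced to respect the elliptic fibration. Everything else is comparatively formal once the fibre-preserving property and the identification of $T$ with a relative moduli space are in place. It is exactly here that the assumption $\kappa(S)\neq 0$ is indispensable, since for $\kappa=0$ (for instance K3 or abelian surfaces) the fibration is not canonical and genuinely different, even non-elliptic, Fourier--Mukai partners occur.
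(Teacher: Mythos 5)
This statement is imported, not proved, in the paper: it is quoted verbatim as Proposition 4.4 of \cite{BM01}, so there is no internal argument to compare yours against; the relevant benchmark is the original proof of Bridgeland--Maciocia, which in turn rests on \cite{Br98}. Measured against that, your sketch takes essentially the same route and contains no false steps. The direction (ii)$\Rightarrow$(i) is exactly \cite{Br98}: the coprimality $(b,\lambda_{S/C})=1$ makes $J^b(S)$ a fine relative moduli space carrying a universal sheaf, and the associated integral functor is an equivalence by Bridgeland's criterion. For (i)$\Rightarrow$(ii), the mechanism behind your ``rigidified Serre functor'' step is worth making explicit, since it is the crux: from $\mathcal{O}_t\otimes\omega_T\cong\mathcal{O}_t$ and $\Phi\circ S_T\cong S_S\circ\Phi$ one gets $\Phi(\mathcal{O}_t)\otimes\omega_S\cong\Phi(\mathcal{O}_t)$, and because for a relatively minimal elliptic surface $c_1(\omega_S)$ is a rational multiple of the fibre class which is \emph{nonzero} precisely when $\kappa(S)\neq 0$, this first kills the rank of $\Phi(\mathcal{O}_t)$ and then confines its support to fibres; identifying $T$ with a two-dimensional fine moduli space $J_S(a,i)$ and reducing $(a,i)$ to $(1,b)$ by composing relative transforms (the $\operatorname{SL}_2(\Z)$-action you invoke; compare \cite[Lemma 4.2]{BM01}, which the present paper itself uses in Lemma \ref{lem:rank2}) then finishes the argument. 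One caution about your ordering: you propose first to ``transport the fibration'' through (anti)pluricanonical rings and only then classify point objects, whereas in \cite{BM01} the point-object classification comes first and compatibility with the fibrations is a consequence. Your order can be made to work, but note that in the case this paper actually needs, $S$ a rational elliptic surface, one has $\kappa(S)=-\infty$: the canonical ring is trivial, and the fibration must be recovered from \emph{anti}-pluricanonical systems (for a Dolgachev-type surface $|-K_S|$ is a single divisor, so one needs $|-mK_S|$); in that regime it is the Serre-functor/numerical argument that carries the weight, exactly as you yourself acknowledge in calling it the main obstacle.
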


%========================================================================
\subsection{Weil--Ch\^atelet group}\label{subsection:WC}

Fix an elliptic surface with a section 
$\pi_0\colon B\to C$. Let $\eta=\Spec k$ be the 
generic point of $C$,
where $k=k(C)$ is the function field of $C$, and let $\overline{k}$ be an 
algebraic closure of $k$.
Put $\overline{\eta}=\Spec \overline{k}$. We define the 
\emph{Weil--Ch\^atelet group} $WC(B)$
by the Galois cohomology $H^1(G, B_{\eta}(\overline{k}))$. 
Here $G=\Gal(\overline{k}/k)$ and $B_{\eta}(\overline{k})$ 
is the group of points of the elliptic curve $B_{\eta}$ defined over 
$\overline{k}$. 

Suppose that we are given a pair $(S,\varphi)$, where $S$ is 
an elliptic surface $S\to C$ and 
$\varphi$ is an isomorphism $J(S)\to B$ over $C$, fixing their $0$-sections. 
Then we have a morphism 
$$
B_{\eta}\times_k S_{\eta}\to J(S_{\eta})\times_k  S_{\eta}\to S_{\eta}.
$$
Here the first morphism is induced by $\varphi^{-1}\times id_S$ and 
the second is given by translation.
We obtain a principal homogeneous space $S_{\eta}$ over $B_{\eta}$.
Fix a point $p\in S_\eta(\overline{k})$ and 
take an element $g\in G$. Then the element $g(p)-p\in J(S_\eta)(\overline{k})$ 
can be regarded as an element
of $B_\eta(\overline{k})$ via $\varphi$. 
The map
$$
G\to B_\eta(\overline{k}) \qquad g\mapsto g(p)-p 
$$
is a $1$-cocycle and changing 
a point $p$ replaces it by a $1$-coboundary. 
Therefore this map defines a 
class in $WC(B)$.
Since this correspondence is invertible (cf.~\cite{Se02}),
% and 
%the group $H^1(G, B_{\eta}(\overline{k}))$ classifies isomorphism classes 
%of principal homogeneous spaces over $B_{\eta}$ (cf. \cite[page 184]{Fri98}
we know that $WC(B)$ consists of all isomorphism
 classes of pairs $(S,\varphi)$.
Here two pairs $(S,\varphi)$ and $(S',\varphi ')$ are \emph{isomorphic} 
if there is an isomorphism 
$\alpha :S\to S'$ over $C$, such that $\varphi '\circ \alpha _* =\varphi$,
where $\alpha _*:J(S)\to J(S')$ is the isomorphism induced by $\alpha$ 
(fixing $0$-sections). 
\noindent
\[ \xymatrix{ J(S) \ar[d]_{\varphi} \ar[r] ^{\alpha _*} & J(S') \ar[d]^{\varphi '} \\  B \ar@{=}[r] & B}\]
\noindent

For any $\xi:=(S,\varphi_1)\in WC(B)$, $g\in G$ and $i\in \Z$, 
we obtain an element $i(g(p)-p)\in B_\eta(\overline{k})$, which can be regarded 
as an element of $J(J^i(S_\eta))(\overline{k})$. Therefore
there is an isomorphism $\varphi_i: J(J^i(S))\to B$ such that 
$(J^i(S),\varphi_i)$ defines the class $i\xi\in WC(B)$. %(\cite[page 38]{Fri95}).

%For simplicity, assume that a minimal ellipitic surface $S$ is rational and define $B=J(S)$.
The group $\Aut _0(B)$ acts on $WC(B)$ as follows:
Let $\delta\in \Aut C$ be the automorphism on $C$ 
induced by $\gamma\in\Aut _0(B)$. Then for 
$$
\xi=(\pi_1\colon S\to C,\varphi_1)\in WC(B),
$$
we define 
\begin{align}\label{eqn:action}
\gamma \xi:=(\delta \circ \pi_1\colon S\to C,\gamma \circ \varphi_1).
\end{align}

Suppose that $\kappa (S)\ne 0$. Then
Theorem \ref{BMelliptic} implies that the map 
\begin{equation}\label{eq:WCFM} 
\Phi \colon
\bigl\{ i\xi \in WC(B) \bigm| i\in (\Z/\lambda_{S/C}\Z)^* \bigr\}\to \FM (S)
\quad   i\xi=(J^i(S),\varphi_i) \mapsto J^i(S)
\end{equation}
is surjective.
For the involution $\gamma\in \Aut_0(B/C)$, 
we shall see in \S \ref{subsec:examples} that 
%$$(J^i(S),\gamma\circ \varphi _i)=
$$\gamma\xi=-\xi.$$
In particular, the map $\Phi$ is not injective whenever
$\lambda_{S/C}>2$. Therefore it is important to investigate the 
preimages of $\Phi$ when 
we study the Fourier--Mukai number of $S$.
 
%Take an element $(\pi_1\colon S\to C,\varphi)\in WC(B)$ and  a closed point $t\in C$ such that $S_t$ is not a multiple fiber. Then $B_t$ and $S_t$ are isomorphic each other (cf. \cite[page 46]{FM94}).  Hence we have 
%
%\begin{equation}\label{eqn:Delta}
%\Delta(\pi_0)\cup \{s_1,\ldots,s_n\}=\Delta(\pi_1),
%\end{equation}
%  
%where $s_i$'s are the points over which $\pi_1$ has multiple fibers. 

%========================================================================
\subsection{Local invariant}\label{subsec:local}

Consider the completion $\wh{\mc O_{C,t}}$ of the local ring $\mc O_{C,t}$ for $t\in C$.
 We denote by $K_t$ its field of fraction and put $\wt{B_t}:=B\times_C \Spec K_t$
and $\wt{S_t}:=S\times_C \Spec K_t$ for $\xi=(S,\varphi)\in WC(B)$.
If $S_t$ is not a multiple fiber, $S_t$ has a reduced irreducible component. Thus
Hensel's lemma implies that there is a section of 
$\wt{S_t} \to \Spec K_t$ and then $\wt{S_t}$ 
is a principal homogeneous
space over $\wt{B_t}$. Put $WC(B_t):=H^1(G_t,\wt{B_t}(\overline{K_t}))$, where
$G_t$ is the Galois group of the local field $K_t$ and $\overline{K_t}$ is an 
algebraic closure of ${K_t}$.
 Denote 
by $\xi_t$ the class in $WC (B_t)$ induced by $\xi$.
Then
there is a group homomorphism 
\begin{equation}\label{eq:BtoBt}
WC(B) \to \bigoplus _{t\in C} WC(B_t) \qquad \xi\mapsto (\xi _t)_{t\in C},
\end{equation}
which is compatible with the natural group action of $\Aut_0 B$ (cf.~(\ref{eqn:action})).
The element $\xi_t$ is called \emph{local invariant} at $t$ and  
the kernel of the map (\ref{eq:BtoBt}) is called  \emph{Tate--Shafarevich group}
and denote it by $\cy{Sh}(B)$. Namely,
$\cy{Sh}(B)$ is the subgroup of $WC(B)$ 
which consists of all isomorphism classes of pairs 
$(\pi_1\colon S\to C,\varphi_1)$ such that 
$\pi_1$ has no multiple fibers. 

It is known that for $\xi=(\pi_1\colon S\to C,\varphi_1)\in WC(B)$, 
$\pi_1$ has a multiple fiber of multiplicity $m$ over 
$s\in C$ if and only if $\xi_s$ in (\ref{eq:BtoBt}) has order $m$.
Moreover the map in (\ref{eq:BtoBt}) is surjective
if $B$ is not the product $C\times E$, where $E$ is an elliptic curve
(%\cite[page 185]{Fri98}, % \cite[page 38]{Fri95},
 cf. \cite[Theorem 5.4.1]{CD89}). 

We have the following isomorphisms (\cite[page 124]{Do81});
%and fix them below;
%
\begin{align}
WC(B_{s})& \cong H^1(B_s,\Q/\Z) \label{eq:localWC1} \\
&\cong
\begin{cases}
\Q/\Z\oplus \Q/\Z \quad \text{ if $B_s$ is a smooth elliptic curve, }\\
\Q/\Z \quad \text{ if $B_s$ is a singular fiber of type $\rom{I}_n$ $(n>0)$, }\\0 \quad \text{ otherwise. }\\
\end{cases}\label{eq:localWC2}
\end{align}
The group $\Aut _0(B,s)$ naturally acts on both sides of
(\ref{eq:localWC1}) and the isomorphism in (\ref{eq:localWC1})
is equivariant 
under these actions.

%=======================================================================
 \subsection{Proof of Theorem \ref{thm:FMelliptic0}}
 \label{section:rational}
%=======================================================================
Throughout in this subsection,
we denote by $\pi _0\colon S\to \PP^1$  a rational  elliptic surface
with a fiber of type $_m\rom{I}_n$ over a point $s\in\PP ^1$ 
for $m>1$, and denote its Jacobian by $\pi_0\colon B\to \PP ^1$.

\begin{remark}\label{rem:rationalBS}
For a rational  elliptic surface $S$, $B=J(S)$ is again rational.
Conversely, starting from a rational  elliptic surface $B$, 
we obtain a rational surface $S$ by a logarithmic transformation 
along a single point $s$. (To show these statements, see, for instance,   
\cite[Proposition 1.3.23, Theorem 1.6.7]{FM94}.)
In particular, $J^i(S)$ is also rational, since 
$J^i(S)$ is obtained from $B$ by a logarithmic transformation. 
\end{remark}
%then we can regard the pair $(S,\id_B)\in WC(B)$ as the element of 
%the form $(p/m,q/m)\in \Q/\Z$ for $p,q\in \Z_{>0}$, 
%Moreover $S$ is obtained from $B$ by a 
%logarithmic transformation along a fiber of type $\rom{I}_n$ $(n\ge 0)$.
Any automorphisms 
of $B$ induce automorphisms of $\PP^1$, 
since the rational surface 
$B$ has a unique elliptic fibration. 
Hence we have natural homomorphisms 
$$
\Aut S\to \Aut _0(B,s)\to \Aut \PP^1 .
$$

\begin{lem}\label{lem:naka}
The group 
$$
\Image (\Aut _0(B,s)\to \Aut \PP ^1 )
\cong  
\Aut _0(B,s)/\Aut _0(B/\PP ^1)$$
is finite. 
\end{lem}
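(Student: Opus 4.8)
The plan is to show that the image $\Image(\Aut_0(B,s)\to\Aut\PP^1)$ is finite by exhibiting it as a subgroup of a finite group. The key observation is that automorphisms coming from $\Aut_0(B,s)$ must respect the geometric structure of the elliptic fibration $\pi_0\colon B\to\PP^1$, and in particular they must permute the finitely many special points of $\PP^1$ over which the fibration degenerates or has distinguished behaviour. An element $\delta\in\Aut\PP^1$ in the image fixes $s$ by the very definition of $\Aut_0(B,s)$, and since it is induced by an automorphism $\gamma$ of $B$ fixing the $0$-section and commuting with $\pi_0$, the induced $\delta$ must preserve the discriminant locus $\Delta(\pi_0)\subset\PP^1$, i.e.\ it permutes the points over which $\pi_0$ has singular fibers.

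First I would record that $B$ is a \emph{rational} elliptic surface, so its Euler characteristic forces $\pi_0$ to have singular fibers whose Euler numbers sum to $12$; in particular the discriminant locus $\Delta(\pi_0)$ is a nonempty finite set of points of $\PP^1$. Next I would argue that every $\delta$ in the image preserves $\Delta(\pi_0)$ as a set: since $\gamma$ is an automorphism of $B$ over $\delta$, it carries the fiber $B_t$ isomorphically onto $B_{\delta(t)}$, so singular fibers go to singular fibers and $\delta(\Delta(\pi_0))=\Delta(\pi_0)$. This yields a homomorphism from the image to the symmetric group on the finite set $\Delta(\pi_0)$.

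The remaining point is that the kernel of this permutation action is itself finite. An element $\delta$ in the kernel fixes every point of $\Delta(\pi_0)$ \emph{and} fixes $s$. If $\Delta(\pi_0)\cup\{s\}$ contains at least three distinct points of $\PP^1$, then $\delta$ fixes three points of $\PP^1$ and is therefore the identity, so the image injects into $\operatorname{Sym}(\Delta(\pi_0))$ and is finite. Since a rational elliptic surface has at least two singular fibers (a single fiber cannot account for Euler number $12$ while giving a relatively minimal rational surface) and $s$ may be taken to lie among or apart from them, one checks that $|\Delta(\pi_0)\cup\{s\}|\ge 3$ in all relevant cases; when $s\in\Delta(\pi_0)$ one uses that $\Delta(\pi_0)$ already has at least two points together with a third special point, and when $s\notin\Delta(\pi_0)$ the set $\Delta(\pi_0)\cup\{s\}$ has at least three points directly.

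The main obstacle I anticipate is the low-cardinality bookkeeping: verifying rigorously that one always has enough distinguished points on $\PP^1$ (singular-fiber points together with $s$) to pin $\delta$ down to finitely many possibilities, since an automorphism of $\PP^1$ is only forced to be trivial once it fixes three points. Rather than enumerate the fiber-type configurations, the clean route is to note that $\Aut_0(B/\PP^1)$ is by definition the kernel of $\Aut_0(B,s)\to\Aut\PP^1$, so the image is exactly the quotient $\Aut_0(B,s)/\Aut_0(B/\PP^1)$ as claimed; finiteness then follows from the embedding into $\operatorname{Sym}(\Delta(\pi_0))$ after confirming the elementary three-point argument, which is where the only real care is needed.
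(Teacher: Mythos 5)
Your overall strategy coincides with the paper's (embed the image into the permutations of a finite set of distinguished points of $\PP^1$, then use that an element of $\Aut\PP^1$ fixing three points is the identity), and your case $s\notin\Delta(\pi_0)$ is fine, since a rational elliptic surface indeed has at least two singular fibers. The gap is in the case $s\in\Delta(\pi_0)$: there you need $|\Delta(\pi_0)|\ge 3$, and your justification --- that $\Delta(\pi_0)$ ``has at least two points together with a third special point'' --- never exhibits this third point. Your general lower bound of two singular fibers does not suffice: rational elliptic surfaces with \emph{exactly} two singular fibers exist, namely the configurations $\rom{II}^*+\rom{II}$, $\rom{III}^*+\rom{III}$, $\rom{IV}^*+\rom{IV}$ and $\rom{I}_0^*+\rom{I}_0^*$. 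If $s$ were one of the two discriminant points, your permutation argument would only pin down two points of $\PP^1$, and the subgroup of $\Aut\PP^1$ fixing two points is $\C^*$, which is infinite.

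This gap is not cosmetic, because your argument nowhere uses the standing hypothesis that the fiber $B_s$ is of type $\rom{I}_n$ $(n\ge 0)$, and without that hypothesis the lemma is actually false. Take $B$ given by the Weierstrass equation $y^2=x^3+t$, the rational elliptic surface with a fiber of type $\rom{II}^*$ over $t=\infty$ and of type $\rom{II}$ over $t=0$, and let $s$ be the point $t=0$: the automorphisms $(x,y,t)\mapsto(\lambda^2x,\lambda^3y,\lambda^6t)$, $\lambda\in\C^*$, fix the $0$-section and the point $s$, so they lie in $\Aut_0(B,s)$, yet they map onto an infinite subgroup of $\Aut\PP^1$. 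The paper closes exactly this loophole by using the hypothesis together with Persson's classification: no two-fiber configuration contains a multiplicative fiber $\rom{I}_n$ $(n>0)$, so when $B_s$ is of type $\rom{I}_n$ one always has $|\Delta(\pi_0)\setminus\{s\}|\ge 2$, giving the three required points ($s$ plus two others). To repair your proof you must invoke this (or an equivalent fact, e.g.\ that an $\rom{I}_n$ fiber with $n>0$ forces the $j$-map to be nonconstant, and a non-isotrivial elliptic surface over $\PP^1$ with a section has at least three singular fibers).
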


\begin{proof}
Recall that the fiber of $\pi_0$ over the point $s$ is of 
type $\rom{I}_n$ for $n\ge 0$.
By a quick view of Persson's list \cite{Pe90}, 
we know that $|\Delta (\pi_0)\backslash \{s\}|\ge 2$.   
Since the group 
$
\Image (\Aut _0(B,s)\to \Aut \PP ^1 )
$ 
preserves the point $s$ and the set $\Delta (\pi_0)\backslash \{s\}$,
it is finite. 
\end{proof}

We put 
%\begin{align*}
$$
N_1:=\Image (\Aut _0(B,s)\to \Aut \PP ^1 ), \quad 
N_2:=\Coker (\Aut S\to N_1)
$$
%\end{align*}
and call their cardinalities
$
n_1,n_2
$
respectively. We define $\xi:=(S,\id _B)\in WC(B)$.

Now we are in position to show Theorem \ref{thm:FMelliptic0}.
\newline

%%%
%%%
%%%  Proof of Theorem 1.1.

\noindent
\emph{Proof of Theorem \ref{thm:FMelliptic0}.}
First of all, we note that $\lambda _{S/\PP^1}=m$, since
every $(-1)$-curve on $S$ is a $m$-section of $\pi_1$.
By the definition of the map $\Phi$ in (\ref{eq:WCFM}), we have
the natural one to one correspondence between the set 
$\Phi^{-1}(J^i(S))$
and the set 
$$
I(i):=\bigl\{k\in (\Z/m\Z)^* \bigm| J^i(S)\cong J^k(S)\bigr\}
$$ 
for any $i\in (\Z/m\Z)^*$. 

Because there is an isomorphism $J^j(J^i(S))\cong J^{ij}(S)$
 for $i,j\in \Z/m\Z$, 
$S$ is isomorphic to a surface $T$ if and only if
$J^i(S)\cong J^i(T)$ for some $i\in (\Z/m\Z)^*$. 
Then we have the equality $|\Phi^{-1}(J^1(S))|=|\Phi^{-1}(J^i(S))|$
 for any $i\in (\Z/m\Z)^*$. 
Therefore we know that 
$$
|\FM (S)|=\frac{|(\Z/m\Z)^*|}{|\Phi ^{-1}(J^1(S))|}=\frac{\varphi(m)}{|I(1)|}.
$$

Henceforth we identify $S$ and $J^1(S)$ by the natural isomorphism between 
them. 
For each $k\in I(1)$, we fix an isomorphism $\alpha_k:S \to J^k(S)$. 
Because both of $S$ and $J^k(S)$ are rational,
$\alpha_k$ induces an automorphism $\delta_k$ of 
$\PP^1$ such that the following diagram is commutative:
\noindent
\[ \xymatrix{ S \ar[d]_{\pi_1} \ar[r]^{\alpha_k}& J^k (S) \ar[d]^{\pi_k} \\  \PP ^1 \ar[r] ^{\delta_k} & \PP ^1}\]
Consider the following automorphism $\gamma_k\in \Aut_0(B,s)$; 
$$
\gamma_k:=\varphi_k\circ\alpha _{k *}\colon 
B= J(S)\to  J(J^k (S))\to B,
$$
where we define $\varphi_k$ as $k\xi=(J^k(S),\varphi_k)$ holds 
(see \S \ref{subsection:WC}).
Hence for any $k\in I(1)$,
we can find an element $\gamma_k$ of $\Aut _0(B,s)$ 
such that $\gamma_k\xi=k\xi$ (see (\ref{eqn:action})). In particular
we have
\begin{equation}\label{eq:I(1)}
I(1)=\bigl\{ 
k\in  (\Z/m\Z)^*  \bigm|
k\xi \in \Aut _0(B,s)\xi
\bigr\}
\end{equation} 
and hence
$$
I(1)\le |\Aut _0(B,s)\xi|.
$$ 
Next we show the following;
\begin{equation}\label{eqn:len2}
|\Aut _0(B,s)\xi|\le n_2|\Aut _0(B/\PP ^1)|.
\end{equation}
Let us define two equivalent conditions on the group $\Aut_0(B,s)$:
For $\gamma_1,\gamma_2\in \Aut_0(B,s)$, define
\begin{align*}
\gamma_1 \sim _1 \gamma_2 &\iff \gamma_1\xi=\gamma_2\xi,\\
\gamma_1 \sim _2 \gamma_2 &\iff \gamma\gamma_1\xi=\gamma_2\xi
\mbox{ for some } \gamma \in \Aut_0(B/\PP ^1). 
\end{align*}
By the definition, we have 
\begin{align*}
\gamma_1\sim_2\gamma_2
&\iff
\gamma \gamma_2^{-1}\gamma_1= \alpha_*
\mbox{ for } \alpha \in \Aut S \mbox{ and }
\gamma \in \Aut_0(B/\PP ^1)
\end{align*}
But these conditions are also equivalent to the fact that
the automorphism of 
$\PP ^1$ induced by 
$\gamma_2^{-1}\gamma_1$ belongs to
the group 
$$
\Image (\Aut S\to \Aut \PP ^1).
$$
Therefore there is a one-to-one correspondence between
the set $\Aut _0(B,s)/\sim_2$ and the set $N_2$.
By the definitions of $\sim_1$ and $\sim_2$,
 at most $|\Aut_0 (B/\PP^1)|$ elements in  $\Aut _0(B,s)/\sim_1$
correspond to a single element in  $\Aut _0(B,s)/\sim_2$. 
Hence (\ref{eqn:len2}) follows.  

Put $n_0:=n_1|\Aut _0(B/\PP ^1)|$, 
then we obtain 
$$
\frac{\varphi(m)}{n_0}\le
\frac{\varphi(m)}{n_2|\Aut _0(B/\PP ^1)|}\le
\frac{\varphi(m)}{|I(1)|}=|\FM(S)|.
$$ 
Obviously the integer $n_0$ is independent on the choice of $m$.

\qed

\begin{remark}\label{rem:FM1}
For a minimal  rational elliptic surface $S$ 
without multiple fibers or with a multiple fiber of multiplicity $2$,
we can readily see from Theorem \ref{BMelliptic} 
that the Fourier--Mukai partner of $S$ is only itself, i.e.
$$|\FM(S)|=1.$$
\end{remark}

%%%%%%%%%%%%%%%%%%%%%%%%%%%%%%%%%%%%%%%%%%%%%%%%%%%%%%%%%%%%%%%%%%%%%%%%%%%
%%%%%%%%%%%%%%%%%%%%%%%%%%%%%%%%%%%%%%%%%%%%%%%%%%%%%%%%%%%%%%%%%%%%%%%%%%%

\subsection{Examples}\label{subsec:examples}
Our method in the proof of Theorem \ref{thm:FMelliptic0} sometimes 
gives the Fourier--Mukai numbers of some rational  
elliptic surfaces.
Here we use the same notation as in the previous subsection.

Since $B$ is also rational, 
$\cy{Sh}(B)$ is trivial (cf. \cite[Example 1.5.12]{FM94}, \cite[Corollary 5.4.9]{CD89}).  
In particular, the homomorphism (\ref{eq:BtoBt}) becomes an isomorphism
\begin{equation}\label{eq:rational}
WC(B)\cong \bigoplus_{t \in \PP ^1} WC(B_{t}) \qquad \xi\mapsto (\xi_t)_{t\in \PP^1}
\end{equation}
%
%Hence if $\pi_1$ has no multiple fiber, equivalently $S\cong J(S)$, 
%the element $\xi:=(S,\id_B)$ is trivial in the group $WC(B)$.  
%$\bigoplus_{t \in \PP ^1} WC(B_{t}).$
Because $\pi_1$ has a multiple fiber of type $_m\rom{I}_n$ 
over the point $s\in \PP ^1$, 
 the element $\xi=(S,\id _B)\in WC(B)$ is completely determined by an 
element $\xi_s\in WC(B_s)$ of the form
$$
\xi_s=
\begin{cases}
(p/m,q/m) \in \Q/\Z\oplus\Q/\Z \quad \text{ in the case $n=0$, }\\
p/m \in \Q/\Z \quad \text{ in the case $n>0$. }\\
\end{cases}
$$
Here we identify $WC(B_s)$ with $\Q/\Z\oplus\Q/\Z$ or $\Q/\Z$ by
the isomorphisms (\ref{eq:localWC1}) and (\ref{eq:localWC2}),
and 
recall that the order of $\xi_s$ is $m$.
%In particular, $(S,\id_B)$ corresponds to 
%for $p,q\in \Z_{>0}$, and

We put
\begin{align*}
I':&=\bigl\{ 
k\in  (\Z/m\Z)^*  \bigm|
k\xi_s \in \Aut _0(B/\PP^1)\xi_s 
\bigr\}\\
&(=\bigl\{ 
k\in  (\Z/m\Z)^*  \bigm|
k\xi \in \Aut _0(B/\PP^1)\xi 
\bigr\}).
\end{align*}

\paragraph{Case: $B_s$ is a smooth elliptic curve.}
Observing the isomorphisms (\ref{eq:localWC1}) and (\ref{eq:localWC2})
carefully,
we know that a generator $\gamma$ of $\Aut_0(B/C)$ acts on   
$
WC(B_{s})\cong \Q/\Z\oplus \Q/\Z
$
as 
\begin{equation}\label{eq:gamma}
\gamma (a,b)=\begin{cases}
 (-a,-b)   & \text{ if } \order\gamma=2\\
  (-b,a)   &  \text{ if }\order\gamma=4\\
(-b,a+b) &  \text{ if }\order\gamma=6,
\end{cases}
\end{equation}
where $a,b\in \Q/\Z$.

\begin{ex}\label{ex:46}
\begin{enumerate}
\item
Consider the case $|\Aut_0(B/\PP^1))|=4$ and take
$\xi_s=(1/5,3/5)$. Then $|I'|=4$.
\item
Consider the case $|\Aut_0(B/\PP^1))|=6$ and take
 $\xi_s=(1/7,4/7)$. Then $|I'|=6$.
\end{enumerate}
\end{ex}

\paragraph{Case: $B_s$ is a singular fiber of type $\rom{I}_n$ $(n>0)$.}
A generator $\gamma$ of $\Aut_0(B/C)\cong \Z/2\Z$ acts on   
$
WC(B_{s}) \cong \Q/\Z
$
as 
\begin{equation}
\gamma a=-a
\end{equation}
for $a\in \Q/\Z$.
Therefore we obtain
$I'=\{1,-1\}(\subset (\Z/m\Z)^*)$. 

\paragraph{Assumption $n_1=1$.}
Assume that $n_1=1$, which also implies $n_2=1$.
Then by the proof of Theorem \ref{thm:FMelliptic0}, 
we know that the set $\Aut _0(B,s)/\sim_2$ has a single element.
Hence
for any $\gamma_1\in \Aut_0(B,s)$, there is an automorphism
$\gamma\in \Aut_0(B/\PP ^1)$ such that
$\gamma\xi= \gamma_1\xi$.  
This implies 
\[
I'=\bigl\{ 
k\in  (\Z/m\Z)^*  \bigm|
k\xi \in \Aut _0(B,s)\xi 
\bigr\},
\]
and hence $I'=I(1)$ by (\ref{eq:I(1)}).
Therefore we obtain the formula;
$$
|\FM(S)|=\frac{\varphi (m)}{|I'|}.
$$

As mentioned above (cf.~(\ref{eq:rational})), every rational  
elliptic surface $\pi_1\colon S\to \PP^1$ is determined by its local 
invariant $\xi_s$ in $WC(B_s)$.  
On the other hand, $|I'|$ can be easily computed 
if we are given the local invariant $\xi_s$. 
In particular, if $n_1=1$ for some fixed $B$ and $s\in \PP^1$, 
the Fourier-Mukai number
$|\FM(S)|$ with $B=J(S)$ is computable. 

%%%
%%%
%%% example

\begin{ex}
We can actually find the following rational  
elliptic surfaces $\pi _0\colon B\to \PP^1$ 
in the Persson's list \cite{Pe90}.
As we see below, they satisfy $n_1=1$ for some $s\in\PP^1$. 
We have a lot of choices of $B$ and $s\in \PP^1$ 
satisfying $n_1=1$ by \cite{Pe90}.
\begin{enumerate}
\item
Suppose that $\pi_0$ has two singular fibers of types $\rom{II}^*$ and $\rom{II}$.
Choose a point $s$ such that $B_s$ is smooth. 
Then any elements in the group $N_1$ should fix all three points in the set
 $\Delta(\pi_0)\cup \{s\}$.
Therefore it must be the identity, i.e. $n_1=1$. In this case, the $J$-map has the constant
 value $0$ and $|\Aut _0(B/\PP^1)|=6$.
Define $\xi_s: =(1/7,4/7)\in WC(B_s)$ and denote by $S$
the rational  elliptic surface
 corresponding to $\xi_s$. Then Example \ref{ex:46}(ii) implies 
$$
|\FM (S)|=\frac{\varphi (7)}{6}=1.
$$ 
\item
Suppose that $\pi_0$ has a singular fiber of type $\rom{II}^*$ and two singular fibers of
type $\rom{I}_1$. Take a point $s$ such that $B_s$ is of type $\rom{I}_1$.
Then by the same reason as above, we conclude $n_1=1$.
In this case, $|\Aut _0(B/\PP^1)|=2$. Apply a logarithmic transformation
along the point $s$ and then we obtain 
a rational  elliptic surface $S_m$ whose Jacobian surface is $B$,
and $S_m$ has a multiple fiber of type $_m\rom{I}_1$ over the point $s$ 
for some $m>0$.
Assume $m>2$ to assure $1\ne -1$ in $(\Z/m\Z)^*$. Then we know  
$$
|\FM (S_m)|=\frac{\varphi (m)}{2}.
$$ 

\end{enumerate}
\end{ex}

%%%%%%%%%%%%%%%%%%%%%%%%%%%%%%%%%%%%%%%%%%%%%%%%%%%%%%%%%%%%%%%%%%%%%%%%%%%%%%%
%%%%%%%%%%%%%%%%%%%%%%%%%%%%%%%%%%%%%%%%%%%%%%%%%%%%%%%%%%%%%%%%%%%%%%%%%%%%%%%
\section{Smooth minimal  $3$-folds with $\kappa (X)=1$}\label{section:3folds}
%%%%%%%%%%%%%%%%%%%%%%%%%%%%%%%%%%%%%%%%%%%%%%%%%%%%%%%%%%%%%%%%%%%%%%%%%%%%%%%
%%%%%%%%%%%%%%%%%%%%%%%%%%%%%%%%%%%%%%%%%%%%%%%%%%%%%%%%%%%%%%%%%%%%%%%%%%%%%%%

\subsection{Fourier--Mukai transforms between varieties of fiber products}
The results in this subsection  must be well-known to specialists.
Let $X$ and $Y$ be smooth projective varieties and
$\mc U$ an object in $D^b(X\times Y)$.
Then the object $\mc U$ determines the integral functor
$$
\fm{\mc U}:=\RR\pi _{Y*}(\pi^*_X(-)\ltensor \mc U)\colon D^b(X)\to D^b(Y),
$$
where $\pi_X\colon X\times Y\to X$ and $\pi_Y\colon X\times Y \to Y$
are projections.
We call $\mc U$ the kernel of the functor.
If $\fm{\mc U}$ gives an equivalence, 
we call it a \emph{Fourier--Mukai transform}. 

Next suppose that we are given a closed integral subvariety 
$\iota \colon Z\hookrightarrow  X\times Y$
and a perfect object $\mc U_Z\in D_{\perf}(Z)$. 
Moreover assume that the restrictions 
$\pi_X |_Z$ and $\pi_Y|_Z$ are flat.
Then the projection formula yields that the functor 
$$
\RR(\pi _{Y}|_Z)_*((\pi_X|_Z)^*(-)\ltensor \mc U_Z)\colon D^b(X)\to D^b(Y)
$$
is isomorphic to the functor
$\fm{\iota_*\mc U_Z}$.
By the abuse of notation, we also denote it by 
$\fm{\mc U_Z}$.
 
Let $X_0$ and $Y_0$ be smooth closed subvarieties of $X$ and $Y$ 
respectively and subvariety $Z_0(\subset Z)$ is 
a scheme-theoretic pull-back of $X_0$ and $Y_0$ by $\pi_X|_Z$ and $\pi_Y|_Z$.
We consider the following diagram:
\[ 
\xymatrix{ 
X  &    Z \ar[l]_{\pi_X|_{Z}} \ar[r]^{\pi_Y|_{Z}}  & Y    \\
X_0 
\ar@{^{(}->}[u]^{\iota_X} &  Z_0 \ar[l]^{\pi_X|_{Z_0}} \ar[r]_{\pi_Y|_{Z_0}}
\ar@{^{(}->}[u]^{\iota_Z} & Y_0 
\ar@{^{(}->}[u]_{\iota_Y} \\ 
}\]

\begin{lem}\label{lem:flatprojection}
The integral functor 
$\fm{\mc U_0}\colon D^b(X_0)\to D^b(Y_0)$
is a Fourier--Mukai transform, where $\mc U_0:=\LL\iota_Z^* \mc U_Z$. 
\end{lem}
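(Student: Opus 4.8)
The plan is to prove that $\fm{\mc U_0}$ is an equivalence by realizing it as a base change, along $\iota_Y$, of the given Fourier--Mukai transform $\fm{\mc U_Z}\colon D^b(X)\to D^b(Y)$, and then using base-change compatibility to deduce the equivalence on $Y_0$ from the equivalence on $Y$. The key observation is that, under the flatness hypotheses on $\pi_X|_Z$ and $\pi_Y|_Z$ (and hence on their restrictions to $Z_0$), the square
\[
\xymatrix{
Z_0 \ar[r]^{\pi_Y|_{Z_0}} \ar@{^{(}->}[d]_{\iota_Z} & Y_0 \ar@{^{(}->}[d]^{\iota_Y} \\
Z \ar[r]^{\pi_Y|_Z} & Y
}
\]
is a fiber square, and similarly for $X$. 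The first step is therefore to set up the relevant base-change isomorphisms and to verify that the derived pullbacks behave as expected, i.e.~that $\LL\iota_Z^*\mc U_Z = \mc U_0$ is again perfect (automatic since $\iota_Z$ is a morphism of the appropriate kind and $\mc U_Z$ is perfect), and that the flatness of the projections makes all the pullback and pushforward operations commute without higher-Tor corrections.

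First I would compare the two functors $\fm{\mc U_0}$ and the composite obtained by restricting $\fm{\mc U_Z}$ along $\iota_Y$. Using flat base change for $\pi_Y|_Z$ over the square above, together with the projection formula (as already invoked in the paragraph preceding the lemma), I would establish a natural isomorphism of functors
\[
\LL\iota_Y^* \circ \fm{\mc U_Z} \;\cong\; \fm{\mc U_0}\circ \LL\iota_X^*,
\]
expressing that pulling back from $Y$ to $Y_0$ intertwines the big transform with the small one. The heart of the argument is then the standard principle that a base change of a Fourier--Mukai equivalence along a closed immersion remains an equivalence: since $\fm{\mc U_Z}$ is an equivalence with quasi-inverse again of Fourier--Mukai type (say with kernel $\mc U_Z^\vee$ suitably twisted), restricting both the functor and its inverse to the subvarieties yields mutually inverse functors on $D^b(X_0)$ and $D^b(Y_0)$. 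Concretely, one checks that $\fm{\mc U_0}$ admits both a left and a right adjoint given by the restriction of the adjoints of $\fm{\mc U_Z}$, and that the units and counits restrict to isomorphisms.

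I expect the main obstacle to be the careful bookkeeping of the base-change and projection-formula isomorphisms: one must ensure that all the relevant squares are genuinely Tor-independent (guaranteed here by the flatness assumptions, which is precisely why they are hypotheses), so that the derived functors commute on the nose and no spurious higher-derived terms obstruct the isomorphism of functors. A clean way to package this is to appeal to the convolution description of composition of integral kernels: the kernel of the quasi-inverse of $\fm{\mc U_Z}$ convolves with $\mc U_Z$ to give $\mc O_{\Delta_X}$, and restricting this identity along $\iota_X\times\iota_X$ produces $\mc O_{\Delta_{X_0}}$, showing that the restricted kernels are again mutually inverse. The only delicate point is verifying that convolution commutes with the restriction, which once more reduces to the flatness of the projections and the Tor-independence of the defining squares; granting this, the conclusion that $\fm{\mc U_0}$ is a Fourier--Mukai transform follows formally.
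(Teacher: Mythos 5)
Your opening move is sound: flat base change over the cartesian squares $Z_0=Z\times_X X_0=Z\times_Y Y_0$, combined with the projection formula, does give the intertwining $\LL\iota_Y^*\circ\fm{\mc U_Z}\cong\fm{\mc U_0}\circ\LL\iota_X^*$. The gap lies in how you propose to conclude, and it sits exactly at the point you call ``the only delicate point'': the claim that all relevant squares are Tor-independent because of the flatness hypotheses is false. Flatness of $\pi_X|_Z$ and $\pi_Y|_Z$ gives Tor-independence of the two one-sided squares (over $\iota_X\colon X_0\hookrightarrow X$ and over $\iota_Y\colon Y_0\hookrightarrow Y$ separately), but \emph{not} of the square formed by $Z_0\subset Z$ and $X_0\times Y_0\subset X\times Y$, nor of the square formed by $\Delta_{X_0}\subset \Delta_X$ and $X_0\times X_0\subset X\times X$. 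Indeed, writing $\mc O_{X_0\times Y_0}\cong \mc O_{X\times Y_0}\ltensor \mc O_{X_0\times Y}$, flatness of $\pi_Y|_Z$ gives $\mc O_Z\ltensor \mc O_{X\times Y_0}\cong \mc O_{Z_0}$; but the local equations of $X_0\times Y$ pull back to \emph{zero} on $Z_0$ (since $\pi_X(Z_0)\subset X_0$), so the restricted Koszul differentials vanish and the complex
\begin{equation*}
\mc O_Z\ltensor_{\mc O_{X\times Y}}\mc O_{X_0\times Y_0}
\end{equation*}
has cohomology sheaves $\bigwedge^i(\pi_X|_{Z_0})^*N^\vee_{X_0/X}$ in degree $-i$ for $0\le i\le c$, where $c=\codim (X_0\subset X)$. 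This is not a sheaf as soon as $c>0$ (and $c=1$ in the paper's application, Lemma \ref{lem:product}). For the same reason $\Delta_X\cap(X_0\times X_0)=\Delta_{X_0}$ is an excess intersection, so $\LL(\iota_X\times\iota_X)^*\mc O_{\Delta_X}\not\cong\mc O_{\Delta_{X_0}}$: its cohomology sheaves are the exterior powers $\bigwedge^iN^\vee_{X_0/X}$ placed on $\Delta_{X_0}$. Hence restricting the convolution identity $\mc U'\star\mc U\cong\mc O_{\Delta_X}$ along $\iota_X\times\iota_X$ does not produce $\mc O_{\Delta_{X_0}}$, and convolution does not commute with this restriction. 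That step of your argument genuinely fails; it is not bookkeeping.

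Nor can the pullback intertwining alone carry the proof: it only says that $\fm{\mc U'_0}\circ\fm{\mc U_0}$ acts as the identity on objects in the essential image of $\LL\iota_X^*$, and you give no mechanism for passing from that to all of $D^b(X_0)$ --- the assertion that ``the units and counits restrict to isomorphisms'' is precisely what needs proof. The paper argues in the opposite direction: flat base change and the projection formula give the \emph{pushforward} intertwining $\fm{\mc U_Z}(\iota_{X*}\alpha)\cong\iota_{Y*}\fm{\mc U_0}(\alpha)$, which does control skyscrapers because $\iota_{X*}\mc O_s=\mc O_s$. Applying it to $\fm{\mc U_Z}$ and to a quasi-inverse $\fm{\mc U'}$ yields $\iota_{X*}\bigl(\fm{\mc U'_0}\fm{\mc U_0}(\mc O_s)\bigr)\cong\mc O_s$, hence $\fm{\mc U'_0}\fm{\mc U_0}(\mc O_s)\cong\mc O_s$ for every closed point $s\in X_0$, and symmetrically on $Y_0$; then the criterion of \cite[3.3]{BM98} forces both compositions to be twists by line bundles, and evaluating the intertwining once more (on $\mc O_{X_0}$, say) shows these line bundles are trivial, so both compositions are isomorphic to the identity. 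If you wish to salvage your approach, note that skyscrapers do lie in the image of $\LL\iota_X^*$ (pull back the structure sheaf of a slice of $X$ through $s$ transverse to $X_0$), so your intertwining could be combined with the same skyscraper criterion; but as written, the proposal rests on a false Tor-independence claim and a false identification of the restricted diagonal kernel.
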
   

\begin{proof}
By the flat base change theorem
and the projection formula, we obtain
\begin{equation}\label{eqn:P_0}
\fm{\mc U_Z}(\iota_{X*} \alpha)\cong \iota_{Y*}\fm{\mc U_0}(\alpha)
\end{equation}
for all $\alpha \in D^b(X_0)$.
For a quasi-inverse functor $\fm{\mc U'}$ of $\fm{\mc U}$,
a similar statement is true. Hence
we know that 
$$\fm{\mc U'_0}\circ \fm{\mc U_0}(\mc O_s)\cong \mc O_s, \quad
\fm{\mc U_0}\circ \fm{\mc U'_0}(\mc O_t)\cong \mc O_t, 
$$ 
for all closed points $s\in X_0$ and $t\in Y_0$.
By \cite[3.3]{BM98} these conditions imply that 
$$
\fm{\mc U'_0}\circ \fm{\mc U_0}\in \Pic X_0, \quad
\fm{\mc U_0}\circ \fm{\mc U'_0}\in \Pic Y_0
$$
as autoequivalences of $D^b(X_0)$ and $D^b(Y_0)$.
Combining these with (\ref{eqn:P_0}), we conclude that 
$$
\fm{\mc U'_0}\circ \fm{\mc U_0}\cong \id,\quad 
\fm{\mc U_0}\circ \fm{\mc U'_0}\cong \id.
$$
\end{proof}

Let $S_i \to C$ $(i=1,\ldots,4)$ be flat projective 
morphisms between smooth projective varieties. 
We denote various closed embeddings by 
\begin{align*}
\iota _{ij}& \colon S_i\times_C S_j \hookrightarrow S_i\times S_j
\quad (i,j\in \{1,\ldots,4 \}), \\
\iota &\colon S_1\times_C S_3\times S_2\times_C S_4
 \hookrightarrow S_1\times S_2\times S_3\times S_4,\\
\iota _0   & \colon S_1\times_C S_2\times_C S_3\times_C S_4\hookrightarrow 
S_1\times S_2\times S_3\times S_4.
\end{align*}
 
%%%
%%%
%%% lemma: product

\begin{lem}\label{lem:product} 
Suppose that perfect objects 
$$
\mc P \in D_{\perf}(S_1\times_C S_3) 
\mbox{ and } \mc Q \in D_{\perf}(S_2\times_C S_4)
$$ 
give Fourier--Mukai transforms 
$$
\Phi ^{\mc P}\colon  D^b(S_1)\to D^b(S_3)
\mbox{ and }
\Phi ^{\mc Q}\colon  D^b(S_2)\to D^b(S_4)
$$
respectively. 
Assume that both of $S_1\times_C S_2$ and $S_3\times_C S_4$ are smooth.
Then the object 
$$
\mc P\boxtimes \mc Q\in D_{\perf}(S_1\times_CS_2\times S_3\times_CS_4)
$$ 
gives a Fourier--Mukai transform 
$$
\Phi ^{\mc P\boxtimes \mc Q}
\colon  D^b(S_1\times_C S_2)\to D^b(S_3\times_C S_4).
$$
\end{lem}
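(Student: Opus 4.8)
The plan is to prove the statement in two moves: first establish that the external product $\mc P\boxtimes\mc Q$ induces a Fourier--Mukai transform between the \emph{ordinary} products $S_1\times S_2$ and $S_3\times S_4$, and then descend to the fiber products by a single application of Lemma \ref{lem:flatprojection}. To set this up I would take $X=S_1\times S_2$, $Y=S_3\times S_4$, $X_0=S_1\times_C S_2$, $Y_0=S_3\times_C S_4$, and regard $\mc P\boxtimes\mc Q$ as a perfect object on the closed subvariety $Z:=(S_1\times_C S_3)\times(S_2\times_C S_4)$ of $X\times Y=S_1\times S_2\times S_3\times S_4$ (the image of $\iota$); perfectness holds because $\mc P$ and $\mc Q$ are perfect and external products of perfect objects are perfect. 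Here $X_0$ and $Y_0$ are smooth by hypothesis and $X,Y$ are smooth as products of smooth varieties, so the smoothness assumptions needed to invoke Lemma \ref{lem:flatprojection} are in place.

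Next I would check the flatness hypotheses and identify the subvariety $Z_0$. The projection $\pi_X|_Z\colon Z\to S_1\times S_2$ is the product of the two projections $S_1\times_C S_3\to S_1$ and $S_2\times_C S_4\to S_2$; each is flat by base change from the flat morphisms $S_3\to C$ and $S_4\to C$, hence so is their product, and symmetrically $\pi_Y|_Z\colon Z\to S_3\times S_4$ is flat. For the scheme-theoretic pullback $Z_0$ of $X_0$ and $Y_0$, imposing $x_1\sim_C x_2$ on a point $(x_1,x_2,x_3,x_4)$ of $Z$ (which already satisfies $x_1\sim_C x_3$ and $x_2\sim_C x_4$) forces all four points to lie over a common point of $C$; consequently $Z_0$ is exactly the total fiber product $S_1\times_C S_2\times_C S_3\times_C S_4$, i.e.\ the image of $\iota_0$. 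Thus $Z_0$ sits inside $(S_1\times_C S_2)\times(S_3\times_C S_4)$, and $\mc U_0:=\LL\iota_Z^*(\mc P\boxtimes\mc Q)$ is the object denoted $\mc P\boxtimes\mc Q$ in the statement, viewed as a kernel there via $\iota_0$.

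The remaining input, which Lemma \ref{lem:flatprojection} takes for granted, is that $\fm{\mc P\boxtimes\mc Q}\colon D^b(S_1\times S_2)\to D^b(S_3\times S_4)$ is itself a Fourier--Mukai transform. For this I would exhibit its quasi-inverse explicitly: if $\mc P'$ and $\mc Q'$ are kernels of quasi-inverses of $\fm{\mc P}$ and $\fm{\mc Q}$, then $\fm{\mc P'\boxtimes\mc Q'}$ is quasi-inverse to $\fm{\mc P\boxtimes\mc Q}$, because the convolution of kernels is compatible with external products (a K\"unneth computation on $S_1\times S_2\times S_3\times S_4$). Equivalently, one may verify the criterion of \cite[3.3]{BM98} on skyscraper sheaves, using $\fm{\mc P\boxtimes\mc Q}(\mc O_{(a,b)})\cong\fm{\mc P}(\mc O_a)\boxtimes\fm{\mc Q}(\mc O_b)$. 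Either way, $\fm{\mc P\boxtimes\mc Q}$ is an equivalence.

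With all the hypotheses of Lemma \ref{lem:flatprojection} verified, the lemma immediately yields that $\fm{\mc U_0}\colon D^b(S_1\times_C S_2)\to D^b(S_3\times_C S_4)$ is a Fourier--Mukai transform, which is the assertion. I expect the only genuinely delicate point to be the bookkeeping in the second paragraph: correctly distinguishing the three subvarieties of $S_1\times S_2\times S_3\times S_4$ that occur (namely $Z$, the kernel's natural support; $Z_0$, the total fiber product; and $(S_1\times_C S_2)\times(S_3\times_C S_4)$, where the descended kernel lives), confirming that the scheme-theoretic pullback really is the reduced total fiber product so that $\iota_0$ is the relevant embedding, and checking the mild standing conditions on $Z$ in Lemma \ref{lem:flatprojection} (such as integrality, which in any case is not used in its proof). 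Once these identifications are pinned down, the flatness verification and the external-product argument are routine.
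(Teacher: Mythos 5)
Your proposal is correct and follows essentially the same route as the paper: both establish that the pushed-forward kernel gives an equivalence $D^b(S_1\times S_2)\to D^b(S_3\times S_4)$ and then conclude by applying Lemma \ref{lem:flatprojection} with exactly the same data ($X=S_1\times S_2$, $Y=S_3\times S_4$, $Z=S_1\times_C S_3\times S_2\times_C S_4$, $\mc U_Z=\mc P\boxtimes \mc Q$, and $Z_0$ the total fiber product). The only cosmetic difference is that the paper obtains the product equivalence by citing \cite[Exercise 5.20]{Hu06} and then identifies $\iota_{13*}\mc P\boxtimes\iota_{24*}\mc Q\cong\iota_*(\mc P\boxtimes\mc Q)$ via a projection-formula and base-change computation, whereas you prove the same fact directly by exhibiting the quasi-inverse kernel $\mc P'\boxtimes\mc Q'$ through the K\"unneth compatibility of convolution with external products.
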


\begin{proof}
Under the assumptions in Lemma \ref{lem:product}, 
$\Phi ^{\iota_{13*}\mc P\boxtimes \iota_{24*}\mc Q}$ gives a 
Fourier--Mukai transform from
$D^b(S_1\times S_2)$ to $D^b(S_3\times S_4)$ 
(see \cite[Exercise 5.20]{Hu06}).
By the projection formula and the flat base change theorem, 
we have
\begin{align*}
{\iota_{13*}\mc P\boxtimes \iota_{24*}\mc Q}
=&\iota_{L*}\tilde{\pi}_{13}^*\mc P\ltensor 
\pi_{24}^*\iota_{24*}\mc Q \\
=&\iota_{L*}(\tilde{\pi}_{13}^*\mc P\ltensor 
\LL\iota_{L}^*\pi_{24}^*\iota_{24*}\mc Q) \\
=&\iota_{L*}(\tilde{\pi}_{13}^*\mc P\ltensor 
\iota_{R*} \tilde{\pi}_{24}^*\mc Q) \\
=&\iota_{L*}\iota_{R*}
(\LL\iota_{R}^* \tilde{\pi}_{13}^*\mc P\ltensor 
 \tilde{\pi}_{24}^*\mc Q) \\
=&\iota _*(\mc P\boxtimes \mc Q),
\end{align*}
where we consider the following diagram:
\[
\xymatrix{   S_1\times S_3 
& S_1\times S_2\times S_3\times S_4 \ar[l]_{\pi_{13}\qquad } \ar[dr]^{\pi_{24}}  
&     \\
  S_1\times_C S_3 \ar@{^{(}->}[u]^{\iota_{13}}
& S_1\times_C S_3 \times S_2\times S_4 \ar[l]_{\tilde{\pi}_{13}\qquad} 
\ \ar[r]_{\qquad \pi_{24}\circ \iota_L} 
\ar@{^{(}->}[u]^{\iota_{L}} 
& S_2\times S_4  \\
& S_1\times_C S_3 \times S_2\times_C S_4 
\ar[ul]^{\tilde{\pi}_{13}\circ \iota_R} 
\ \ar[r]^{\qquad \tilde{\pi}_{24}} 
\ar@{^{(}->}[u]^{\iota_{R}} 
& S_2\times_C S_4  \ar@{^{(}->}[u]^{\iota_{24}}  \\
}
\]

Apply Lemma \ref{lem:flatprojection} for 
$$
X=S_1\times S_2,\ Y=S_3\times S_4,\ 
Z=S_1\times_CS_3\times S_2\times_CS_4,\ \mc U_Z=\mc P\boxtimes \mc Q
$$
and 
$$
X_0=S_1\times _CS_2,\quad Y_0=S_3\times _CS_4,
\quad Z_0=S_1\times_CS_2\times_C S_3\times_CS_4
$$
to get the conclusion.
\end{proof}

%========================================================================
\subsection{Schoen's construction}\label{subsection:schoen}
Fix a positive integer $N$ and
choose an even integer $m\gg N$.
Take a rational  elliptic surface $\pi_0\colon B\to \PP^1$ with a section.
For a point $s\in\PP^1$,
let $\xi_s$ be an element of the order $m$ in $WC(B_s)$. 
We obtain from (\ref{eq:rational}) 
a rational  elliptic surface 
$\pi_1\colon S_1\to \PP^1$
admitting a unique multiple fiber over $s$ 
such that $J(S_1)\cong B$.
Theorem \ref{thm:FMelliptic0} implies that 
the set $\FM(S_1)$ contains at least $N$ elements $S_1,\ldots,S_N$.

%%%
%%%
%%%  lemma {lem:rank2}

\begin{lem}\label{lem:rank2}
For each $i,j$,
there is a vector bundle $\mc P_0$ on $S_i\times _{\PP ^1}S_j$ with 
$\rk \mc P_0=2$
such that the integral functor  $\fm{\mc P_0}\colon D^b(S_i)\to D^b(S_j)$ becomes a
Fourier--Mukai transform.
\end{lem}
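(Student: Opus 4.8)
The plan is to realize $S_j$ as a relative moduli space of rank-$2$ stable sheaves on $S_i$ and to take its universal sheaf as the kernel $\mc P_0$. Since $S_1$ is rational, $\kappa(S_1)=-\infty\neq 0$, so Theorem \ref{BMelliptic} applies to the elements $S_i,S_j\in\FM(S_1)$ and we may write $S_i\cong J^a(S_1)$ and $S_j\cong J^b(S_1)$ with $(a,m)=(b,m)=1$, where $\lambda_{S_1/\PP^1}=m$. As $S_i$ is itself a rational elliptic surface with a multiple fiber of multiplicity $m$ over $s$, every $(-1)$-curve on it is an $m$-section, so $\lambda_{S_i/\PP^1}=m$ and $J(S_i)=B$.

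First I would fix the numerical data of the moduli problem. I seek an integer $s$ with $sa\equiv b\pmod m$, i.e.\ $s\equiv ba^{-1}\pmod m$, subject to the condition $(s,2m)=1$ required for $J_{S_i}(2,s)$ to be defined (\cite{Br98}). Because $(ba^{-1},m)=1$ and $m$ is even (recall $m$ was chosen even in \S\ref{subsection:schoen}), every representative $s$ of $ba^{-1}$ modulo $m$ is odd; hence $(s,2)=1$ holds automatically and $(s,2m)=1$. For such an $s$, the relative determinant morphism $J_{S_i}(2,s)\to \Pic^s_{S_i/\PP^1}=J^s(S_i)$ is an isomorphism of elliptic surfaces over $\PP^1$, since rank and degree are coprime. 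Combining this with $J^s(J^a(S_1))\cong J^{sa}(S_1)$ and $sa\equiv b\pmod m$, I obtain $J_{S_i}(2,s)\cong J^b(S_1)\cong S_j$.

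By \cite{Br98,BM01}, $J_{S_i}(2,s)$ is a smooth minimal elliptic surface over $\PP^1$, and the coprimality $(2,s)=1$ makes the moduli problem fine, so there is a universal sheaf $\mc P_0$ on $S_i\times_{\PP^1}J_{S_i}(2,s)\cong S_i\times_{\PP^1}S_j$ whose integral functor $\fm{\mc P_0}\colon D^b(S_i)\to D^b(S_j)$ is a Fourier--Mukai transform. The sheaf $\mc P_0$ is automatically supported on the fiber product, because every parametrized sheaf lives on a fiber of $\pi_i\colon S_i\to\PP^1$, and it has rank $2$.

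The delicate point---and the reason rank $2$ is used rather than the naive rank-$1$ kernel coming from $J_{S_i}(1,s')$---is that $\mc P_0$ must be locally free, equivalently a perfect object of $D_{\perf}(S_i\times_{\PP^1}S_j)$; this is exactly what is needed to feed $\mc P_0$ into Lemma \ref{lem:product}. Since $S_i$ and $S_j$ share the Jacobian $B$, they have the same discriminant, so the fiber product is singular over each common singular fiber, and a rank-$1$ (torsion-free but not locally free) universal sheaf would fail to be perfect there. For rank $2$ the coprimality of rank and degree forces every stable sheaf in the family to be locally free on its fiber, including on the fibers of type $\rom{I}_n$ and on the multiple fiber of type $_m\rom{I}_n$. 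I expect the main obstacle to be upgrading this fiberwise local freeness to local freeness of $\mc P_0$ at the singular points of $S_i\times_{\PP^1}S_j$; I would settle it by combining the flatness of $\mc P_0$ over both factors with the explicit local description of the universal sheaf over $\rom{I}_n$-fibers as in \cite{BM01}, the behaviour over the multiple fiber being the subtlest case.
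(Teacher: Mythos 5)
Your proposal is essentially the paper's own proof: the paper likewise realizes one surface as a rank-two relative moduli space of the other ($S_i\cong J^b(S_j)$ directly from Theorem~\ref{BMelliptic}, then $J_{S_j}(2,b)\cong J^b(S_j)$ by \cite[Lemma 4.2]{BM01}, which is exactly your determinant-map step, with the evenness of $m$ ensuring the needed coprimality $(b,2m)=1$), takes the universal sheaf on the fiber product, and defers local freeness and the equivalence to \cite{Br98} and \cite[\S 4]{BM01}, just as you do. Two minor remarks: your detour through $S_1$ and the composition rule $J^s(J^a(S_1))\cong J^{sa}(S_1)$ is unnecessary, since Theorem~\ref{BMelliptic} applies directly to the pair $(S_j,S_i)$; and Bridgeland's equivalence runs from the moduli space to the surface, so your setup (sheaves on $S_i$, moduli space $\cong S_j$) literally yields $D^b(S_j)\to D^b(S_i)$ rather than the direction you claim---harmless here because the lemma quantifies over all ordered pairs $(i,j)$, but it should be stated with the roles of $i$ and $j$ interchanged.
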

\begin{proof}
By the choice of $S_i$ and $S_j$, we have 
$S_i\cong J^b(S_j)$ for some $b\in \Z$ such that $(b,m)=1$ by Theorem \ref{BMelliptic}.
In particular, \cite[Lemma 4.2]{BM01} implies that 
$J_{S_j}(2,b)\cong J^b(S_j)$, where $J_{S_j}(2,b)$ is defined in \S \ref{sub:FMellipitic}.
Take the universal sheaf $\mc P_0$ on 
$J_{S_j}(2,b) \times _{\PP ^1}S_j (\cong S_i\times _{\PP ^1}S_j)$. Then 
$\mc P_0$ satisfies the desired properties. See \cite[\S 4]{BM01} 
and \cite{Br98} for the details.

\end{proof}

Take another rational  elliptic surface 
$\pi\colon S\to \PP^1$.
% such that 
%$\pi$ has only irreducible fibers and no multiple fibers.
Let us denote the elliptic fibrations 
by $\pi_i\colon S_i\to \PP^1$ and define $X_i$ to be 
the fiber product of $S$ and $S_i$ over $\PP^1$ (cf. \cite{Sc88}). 
We define 
$p_i,p,f_i$ as follows.

\noindent
\[ \xymatrix{ & X_i \ar[dl]_{p_i} \ar[dr] ^{p} \ar[dd]_{f_i}&  \\
  S_i \ar[dr]_{\pi_i}&        & S\ar[dl]^{\pi} \\
     & \PP ^1 &  \\ 
}\]

\noindent
We also assume that 
\begin{itemize}%\label{enu:assumptions}
\item
$\Delta(\pi_1)\cap \Delta(\pi)$ is empty (equivalently, 
$\Delta(\pi_i)\cap \Delta(\pi)$ is empty for all $i$, 
since $\Delta(\pi_1)=\Delta(\pi_i)$), and
\item
the generic fibers of $\pi_i$ and $\pi$ are not isogenous each other. 
\end{itemize}
The first condition implies that $X_i$'s are smooth
and the second condition will be used when 
we apply the argument in \cite{Na91}.
The first condition is fulfilled by replacing $\pi$ with
the composition of $\pi$ and some automorphism on $\PP^1$.
The second condition is satisfied by choosing general $S$ 
in the family of elliptic surfaces.

%========================================================================
\subsection{Proof of Theorem \ref{thm:main20}}\label{subsection:main2}
In this subsection
we inherit all notations in \S \ref{subsection:schoen}.
Smooth projective varieties $X$ and $Y$ are said to be 
\emph{deformation equivalent} 
if there is a smooth proper holomorphic map between connected 
complex analytic spaces
$h \colon \mc X\to T$ such that each irreducible 
component of $T$ is smooth and
$\mc X_s\cong X$ and  $\mc X_t\cong Y$ for points $s,t\in T$.  

%%%
%%%
%%% lemma:   deformation equivalent 

\begin{lem}\label{lem:invariant}
All $X_i$'s are minimal with 
$\kappa (X_i)=1$ and 
they have the following Hodge diamond:
$$
\begin{array}{ccccccccc}
 &  &  & 1&  &  &  &  & \\
 &  & 0&  & 0&  &  &  & \\
 & 0&  &19&  & 0&  &  & \\
1&  &19&  &19&  & 1&  & \\
 & 0&  &19&  & 0&  &  & \\
 &  & 0&  & 0&  &  &  & \\
 &  &  & 1&  &  &  &  & \\ 
\end{array}
$$
Furthermore they are deformation equivalent and derived equivalent to
each other.
\end{lem}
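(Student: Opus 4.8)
The plan is to establish Lemma \ref{lem:invariant} in three separate parts: the computation of the Hodge diamond together with minimality and Kodaira dimension, the derived equivalence, and the deformation equivalence. Each $X_i=S\times_{\PP^1}S_i$ is a fiber product of two rational elliptic surfaces over $\PP^1$, precisely the type of $3$-fold introduced by Schoen \cite{Sc88}, and the first condition in \S\ref{subsection:schoen} (disjoint discriminant loci) guarantees smoothness. For the numerical invariants, I would first compute the canonical bundle. By the canonical bundle formula for elliptic surfaces, $K_S=\pi^*(\mc L)$ and $K_{S_i}=\pi_i^*(\mc L_i)$ for suitable line bundles on $\PP^1$ reflecting the multiple fibers; since $S$ is rational without multiple fibers one finds $K_S=\pi^*\mc O(-1)$, while $S_i$ carries a multiple fiber $_m\rom{I}_n$ contributing $K_{S_i}=\pi_i^*\mc O(-1+(m-1))\cdot(\text{fiber class})$, so that $K_{S_i}$ is effective and supported on the multiple fiber. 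Pulling back and using $K_{X_i}=p^*K_S\otimes p_i^*K_{S_i}\otimes(\text{relative dualizing contributions})$ — more cleanly, applying the adjunction/canonical-bundle formula for the fiber product over $\PP^1$ — yields $K_{X_i}=f_i^*\mc O(m-2)$ pulled back from the base, which is nef with numerical dimension $1$. This simultaneously gives minimality (nef canonical divisor) and $\kappa(X_i)=1$ (the Iitaka fibration is $f_i$, whose image is $\PP^1$).

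Next I would compute the Hodge numbers. The symmetry of the diamond ($h^{p,q}=h^{q,p}=h^{3-p,3-q}$ by Hodge and Serre duality) reduces the work to $h^{1,0},h^{2,0},h^{3,0},h^{1,1},h^{2,1}$. The outer entries $h^{1,0}=h^{2,0}=0$ and $h^{3,0}=1$ record $q(X_i)=0$, $h^2(\mc O_{X_i})=0$, and $h^3(\mc O_{X_i})=1$; these follow from the abstract claim in Theorem \ref{thm:main20} that $h^1(\mc O)=h^2(\mc O)=0$, and can be verified via the Leray spectral sequence for $f_i\colon X_i\to\PP^1$ together with $R^qf_{i*}\mc O_{X_i}$ and the fact that both $S$ and $S_i$ are rational (so $h^1(\mc O_S)=h^1(\mc O_{S_i})=0$). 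The nontrivial entries are $h^{1,1}=19$ and $h^{2,1}=19$. For these I would compute the topological Euler characteristic $e(X_i)$ and the Betti numbers directly: since fibers away from $\Delta(\pi)\cup\Delta(\pi_i)$ are products of two elliptic curves (Euler number $0$), $e(X_i)$ is concentrated over the singular fibers, and a multiplicativity/Dolgachev-type count gives $e(X_i)=e(S)\cdot e(S_i)/e(\text{elliptic curve})$ corrections localized at the discriminant — for these Schoen $3$-folds one obtains $e=0$. Combined with $b_0=b_6=1$, $b_1=b_5=0$, and $b_3=2h^{2,1}+2h^{3,0}=40$, the alternating sum forces $b_2=b_4=2h^{1,1}=38$, i.e.\ $h^{1,1}=19$. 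I expect the main obstacle to be carrying out this Euler-characteristic bookkeeping carefully, since it requires knowing exactly which singular fiber types occur on $S$ and $S_i$ and how they contribute when one factor degenerates while the other stays smooth; this is where the explicit structure of the rational elliptic surfaces and the disjointness of discriminants enter decisively.

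For the derived equivalence, I would invoke the machinery of \S\ref{subsection:schoen} directly. By Lemma \ref{lem:rank2}, for each pair $i,j$ there is a rank-$2$ vector bundle $\mc P_0$ on $S_i\times_{\PP^1}S_j$ inducing a Fourier--Mukai transform $\fm{\mc P_0}\colon D^b(S_i)\to D^b(S_j)$. Taking $S_1=S$ in the trivial identity $\id\colon D^b(S)\to D^b(S)$ (kernel the structure sheaf of the diagonal, which lives on $S\times_{\PP^1}S$), I would apply Lemma \ref{lem:product} with the four surfaces being $S,S,S_i,S_j$ over $C=\PP^1$: the external product $\mc O_{\Delta_S}\boxtimes\mc P_0$ on the appropriate quadruple fiber product yields a Fourier--Mukai transform $D^b(S\times_{\PP^1}S_i)\to D^b(S\times_{\PP^1}S_j)$, that is, $D^b(X_i)\cong D^b(X_j)$. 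The smoothness hypotheses of Lemma \ref{lem:product} are exactly the smoothness of the $X_i$ guaranteed above, so this step is essentially formal once Lemma \ref{lem:rank2} and Lemma \ref{lem:product} are in hand.

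Finally, for deformation equivalence I would exhibit all $X_i$ as fibers of a single smooth family. Since each $S_i\in\FM(S_1)$ is obtained from the common Jacobian $B$ by logarithmic transformation, all the $S_i$ arise as members of one connected family of rational elliptic surfaces (they differ only by the choice of local invariant $\xi_s$, equivalently by a choice within the Weil--Ch\^atelet group, which varies in a connected parameter space). Performing the relative fiber product with the fixed surface $S$ over this connected base produces a smooth proper family $\mc X\to T$ over a connected (indeed smooth) base $T$ whose fibers are precisely the $X_i$; the disjointness $\Delta(\pi_i)\cap\Delta(\pi)=\emptyset$ persists in the family and keeps the total space smooth. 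I expect this step to require only a careful setup of the relative logarithmic transformation over a base parametrizing the local invariants, after which smoothness and properness of the family are automatic. The genuinely delicate computation remains the Hodge-number count in the first part.
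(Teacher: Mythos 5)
Your derived-equivalence step is correct and matches the paper exactly (Lemma \ref{lem:rank2} plus Lemma \ref{lem:product} applied to the kernel $\mc O_{\Delta_S}\boxtimes\mc P_0$), but the Hodge-number computation --- which you yourself single out as the delicate part --- has a genuine gap: it is circular. From $e(X_i)=0$ (correctly justified: by disjointness of the discriminants every fiber of $f_i$ is a product with a smooth elliptic curve, hence has Euler number $0$) the alternating sum of Betti numbers gives only the single relation $2+2b_2-b_3=0$, i.e.\ $h^{1,1}=h^{2,1}$, since $b_2=b_4=h^{1,1}$ and $b_3=2+2h^{2,1}$. You then feed in ``$b_3=2h^{2,1}+2h^{3,0}=40$'', but that is precisely the assertion $h^{2,1}=19$ you are trying to prove; nothing in your proposal computes either $h^{1,1}$ or $h^{2,1}$ independently. (Your arithmetic is also off: with $h^{2,0}=0$ one has $b_2=h^{1,1}$, not $2h^{1,1}$, and your own equation would force $b_2=19$, not $38$.) The missing idea, which is the heart of the paper's proof, is a direct computation of the Picard number: there is a short exact sequence $0\to\Pic\PP^1\to\Pic S_i\times\Pic S\to\Pic X_i\to 0$, whose surjectivity and kernel identification come from Namikawa's proof of \cite[Proposition 1.1]{Na91} (this is exactly where the hypothesis that the generic fibers of $\pi_i$ and $\pi$ are non-isogenous enters); since $\rho(S_i)=\rho(S)=10$ for rational elliptic surfaces, this gives $h^{1,1}=\rho(X_i)=10+10-1=19$, and only then does $e(X_i)=0$ yield $h^{2,1}=19$.

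Two further points. First, your canonical-bundle bookkeeping is wrong: for the rational elliptic surface $S_i$ with multiple fiber $F_{\mathrm{mult}}$ of multiplicity $m$ one has $K_{S_i}\sim -F_{\mathrm{mult}}$ (anti-effective --- $S_i$ is rational!), not an effective divisor supported on the multiple fiber, and adjunction along $X_i=g_i^*\Delta_{\PP^1}$ gives $K_{X_i}=(m-1)\,p_i^*F_{\mathrm{mult}}$, i.e.\ the $\Q$-divisor $\frac{m-1}{m}f_i^*\mc O_{\PP^1}(1)$, not $f_i^*\mc O_{\PP^1}(m-2)$; your formula would even give $\kappa(X_i)=0$ when $m=2$. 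The qualitative conclusions (nefness, minimality, $\kappa(X_i)=1$) survive, but only after this computation is repaired --- this is the paper's one-line adjunction argument $K_{X_i}=(K_{S_i\times S}+X_i)|_{X_i}=rF_i$, $r\in\Q_{>0}$. Second, your deformation-equivalence mechanism does not work as stated: the local invariants of fixed order $m$ lie in the discrete torsion group $WC(B_s)$, so with $B$ and $s$ fixed they do not ``vary in a connected parameter space.'' The paper instead invokes \cite[Theorem 1.7.6]{FM94}, that elliptic surfaces over $\PP^1$ with the same multiplicities are deformation equivalent \emph{through elliptic surfaces}, and then passes to relative fiber products; some such monodromy or deformation input is indispensable here and cannot be replaced by varying the Weil--Ch\^atelet class alone.
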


\begin{proof}
Consider the morphism
$$
g_i (:=\pi_i \times \pi )\colon S_i\times S \to \PP^1\times \PP ^1
$$
and then we have $g_i^*\Delta_{\PP ^1} = X_i$.
Therefore the adjunction formula says that 
$K_{X_i}=K_{{S_i}\times S}+X_i|_{X_i}=rF_i$ for a general fiber 
$F_i$ of $f_i$ and some $r\in \Q_{>0}$. This implies that $\kappa (X_i)=1$.
By the use of the flat base change theorem, 
we can show 
\begin{align*}
\mb Rf_{i*}\mc O_{X_i}
\cong& \mb R\pi_{i*}\mc O_{S_i}\ltensor \mb R{\pi}_{*}\mc O_{S}\\
\cong& (\mc O_{\PP^1}\oplus \mc O_{\PP^1}(-1)[-1]) 
     \otimes (\mc O_{\PP^1}\oplus \mc O_{\PP^1}(-1)[-1]) 
\end{align*}
and hence obtain 
$$
\mb R^1f_{i*}\mc O_{X_i}\cong \mc O_{\PP^1}(-1)\oplus \mc O_{\PP^1}(-1) 
\text{ and }
\mb R^2f_{i*}\mc O_{X_i}\cong \mc O_{\PP^1}(-2).
$$
In particular, we  have $h^1(X_i,\mc O_{X_i})=h^2(X_i,\mc O_{X_i})=0$
and 
$h^3(X_i,\mc O_{X_i})=1$
by the Leray spectral sequence.
The Euler number $e(X_i)$ should be $0$, since the Euler number of 
every fiber of $f_i$ is $0$. 

The Picard number $\rho (X_i)(=h^{1,1}=h^{2,2})$ is $19$; 
more precisely there is a short exact sequence
\[
\xymatrix{ 
0\ar[r]& \Pic \PP ^1 \ar[r]^{\Pi\qquad }& \Pic S_i\times \Pic S
\ar[r]^{\qquad p_i^*\otimes p^*} &\Pic X_i\ar[r] &0,
}
\]
where $\Pi (M)=(\pi^*_iM , \pi^*M^{-1})$ for $M\in \Pic \PP ^1$.
The surjectivity of $p_i^*\otimes p^*$ is proved in the proof of 
\cite[Proposition 1.1]{Na91}.
The inclusion 
\begin{equation}\label{eqn:Pi}
\ker (p_i^*\otimes p^*)\subset \im \Pi
\end{equation}
is proved as follows:
First let us state the following claim due to Namikawa.
\begin{cla}[Proof of Proposition 1.1 in \cite{Na91}]\label{cla:namikawa}
Take $L_i\in \Pic S_i$ and $L\in\Pic S$ and suppose that
the line bundle $p_i^*L_i\otimes {p}^*L$ is an effective divisor on $X_i$
(henceforth we identify the isomorphism classes of 
line bundles with the linear equivalence classes of Cartier divisors).
Then there is an integer $a\in \Z$ such that 
both of $L_i\otimes\mc O_{\PP^1}(a)$ and 
$L\otimes \mc O_{\PP^1}(-a)$ are  
effective divisors.
\end{cla}
\noindent
Take $(L_i,L)\in \ker (p_i^*\otimes p^*)$.
We conclude from Claim \ref{cla:namikawa} that 
the pair $(L_i, L)$ is equal to a pair of some effective divisors
 in the group $(\Pic S_i\times \Pic S)/\Pic \PP ^1$.
In particular, to get (\ref{eqn:Pi}),
we may assume that $L_i$ and $L$ are effective divisors.
Then we can easily deduce $(L_i,L) \in \im \Pi$.
 The other inclusion of (\ref{eqn:Pi}) is obvious.

We also know $h^{1,2}(X_i)=h^{2,1}(X_i)=19$ 
by the equality $e(X_i)=2(h^{1,1}-h^{1,2})$.

Elliptic surfaces $S_i\to \PP^1$ ($i=1,\ldots, N$) are 
deformation equivalent through elliptic surfaces
(\cite[Theorem 1.7.6]{FM94}). In particular, all $X_i$'s are also 
deformation equivalent to each other.
 
 The fact that $X_i$'s are mutually derived equivalent follows from  
 Lemma \ref{lem:product}.
 Hence the last assertion follows.
\end{proof}

Before going further, we give a remark which is rather obvious from the above proof:  
In \cite{Sc88}, Schoen takes the fiber products of 
two rational  elliptic surfaces 
without multiple fibers and then
he obtains Calabi--Yau $3$-folds as the result.
In our construction,  
at least one of two elliptic surfaces has a 
multiple fiber. Consequently,
our $3$-folds have the Kodaira dimensions $1$ as above.

%%%
%%%
%%% lemma: non-birational

\begin{lem}\label{lem:birational}
$X_i$ and $X_j$ are not birationally equivalent for $i\ne j$.
\end{lem}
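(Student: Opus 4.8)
The plan is to show that a birational equivalence between $X_i$ and $X_j$ would force an isomorphism between the elliptic surfaces $S_i$ and $S_j$, contradicting the fact that they are distinct Fourier--Mukai partners. The starting observation is that each $X_i$ carries a canonical elliptic fibration structure intrinsic to its birational class: since $\kappa(X_i)=1$, the Iitaka fibration is a birational invariant, and by the computation in Lemma \ref{lem:invariant} (where $K_{X_i}=rF_i$ for a fiber $F_i$ of $f_i$) the Iitaka fibration of $X_i$ is (birational to) the map $f_i\colon X_i\to \PP^1$. Hence any birational map $X_i\dashrightarrow X_j$ must be compatible with the Iitaka fibrations up to an automorphism of the base $\PP^1$, giving a commutative diagram relating $f_i$ and $f_j$ over some $\delta\in\Aut\PP^1$.

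Next I would recover the two surface factors from this fibration. The $3$-fold $X_i$ is the fiber product $S\times_{\PP^1}S_i$, so a general fiber $F_i$ of $f_i$ is the product $E\times E'$ of the two elliptic curves lying over a general point of $\PP^1$ in the two factors. This product decomposition, together with the assumption that the generic fibers of $\pi$ and $\pi_i$ are \emph{not} isogenous, is exactly the input needed to run Namikawa's argument from \cite{Na91}: the non-isogeny hypothesis rigidifies the two elliptic factors so that a birational (indeed a fiberwise-birational) identification of $X_i$ with $X_j$ cannot mix the $S$-factor with the $S_i$-factor. The upshot I am aiming for is that a birational equivalence $X_i\dashrightarrow X_j$ over $\PP^1$ must induce birational maps between the surface factors, namely $S\dashrightarrow S$ and $S_i\dashrightarrow S_j$, each compatible with the elliptic fibrations.

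Finally, I would upgrade these birational maps of surfaces to isomorphisms and derive the contradiction. Since $S_i$ and $S_j$ are relatively minimal elliptic surfaces, a birational map between them compatible with the fibrations is automatically an isomorphism (relatively minimal elliptic surfaces with $\kappa\ge 0$, or more simply with nonconstant $J$-invariant, have no nontrivial fiberwise birational self-maps, and birational minimal surfaces of nonnegative Kodaira dimension are isomorphic). Thus $S_i\cong S_j$ as elliptic surfaces over $\PP^1$. But by construction $S_1,\ldots,S_N$ are pairwise non-isomorphic representatives of distinct classes in $\FM(S_1)$ (this is how they were chosen in \S\ref{subsection:schoen} via Theorem \ref{thm:FMelliptic0}), so $S_i\cong S_j$ forces $i=j$. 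This contradiction establishes that $X_i$ and $X_j$ are not birationally equivalent for $i\ne j$.

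The main obstacle I anticipate is the middle step: carefully controlling the birational map along the special (multiple and singular) fibers, where the fiber product is not simply a product of smooth elliptic curves and where the Iitaka fibration may require resolving indeterminacy. The non-isogeny assumption is what makes the general fiber rigid, but one must verify that the induced identification of factors extends over the whole base and is not merely a generic statement; this is precisely where invoking the structure of Namikawa's argument in \cite{Na91}, rather than reproving it, will carry the weight of the proof.
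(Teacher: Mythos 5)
Your route is genuinely different from the paper's, which never analyzes the birational map fiberwise: the paper first rules out an isomorphism $X_i\cong X_j$ (since $\rho(X_i/\PP^1)=2$, the Iitaka fibration $f_i$ factors through a surface only via $S_i$ or $S$, so an isomorphism over some $\delta\in\Aut\PP^1$ forces $S_i\cong S_j$, possibly via the chain $S_i\cong S\cong S_j$), then shows $X_i$ admits no small contraction (a contracted curve $C$ has $K_{X_i}\cdot C=0$, hence is vertical for $f_i$, again contradicting $\rho(X_i/\PP^1)=2$), and finally invokes the theorem that birational minimal $3$-folds are connected by flops, so a birational map with no small contractions available must be an isomorphism. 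Your plan---restrict the birational map to general fibers, split the resulting isomorphism of abelian surfaces using non-isogeny, and conclude via relative minimal models of elliptic surfaces---avoids the MMP input entirely and can be pushed through, but as written it has a genuine gap at exactly the step you entrust to \cite{Na91}.

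The gap is the claim that non-isogeny prevents the identification from mixing the two factors. What non-isogeny actually gives is this: for general $t$, the birational map restricts to an isomorphism of abelian surfaces $(S_i)_t\times S_t\cong (S_j)_{\delta(t)}\times S_{\delta(t)}$, which up to translation is a group isomorphism; since $\Hom$ between non-isogenous elliptic curves vanishes, this isomorphism is either diagonal ($(S_i)_t\cong(S_j)_{\delta(t)}$ and $S_t\cong S_{\delta(t)}$) or anti-diagonal ($(S_i)_t\cong S_{\delta(t)}$ and $S_t\cong (S_j)_{\delta(t)}$). The anti-diagonal case is \emph{not} excluded by the hypothesis: the hypothesis concerns the fibers of $\pi_i$ and $\pi$ over the \emph{same} point of $\PP^1$, whereas the anti-diagonal case matches the fiber of $\pi_i$ at $t$ with the fiber of $\pi$ at $\delta(t)$, which merely forces identities such as $j_{S_i}=j_S\circ\delta$ and is perfectly consistent with non-isogeny when $\delta\ne\id$ (and your own setup allows nontrivial $\delta$). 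The case is fixable rather than fatal: running your last step in it yields fibration-compatible isomorphisms $S_i\cong S$ and $S\cong S_j$, hence $S_i\cong S_j$ by transitivity, the same contradiction---but this must be argued, not absorbed into the word ``rigidifies''. Two further repairs: the fact you need at the end is uniqueness of relative minimal models for fibrations of fiber genus one (a fibration-compatible birational map between relatively minimal elliptic surfaces is an isomorphism); your alternative justification via ``birational minimal surfaces of nonnegative Kodaira dimension'' does not apply, because the $S_i$ are rational, with $\kappa(S_i)=-\infty$. Finally, \cite{Na91} cannot carry the weight you assign it: the only part of Namikawa's argument used in this paper (Claim \ref{cla:namikawa}) is a Picard-group computation, not a statement that birational maps of the fiber product split into maps of the factors, so you would have to prove the splitting yourself along the lines above.
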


\begin{proof}
First we show that $X_i$ and $X_j$ are not isomorphic as follows.
Suppose that there is an isomorphism $\varphi\colon X_i\to X_j$.
Note that $f_i$ and $f_j$ are the Iitaka fibrations, that is,
they are defined by the complete linear system of some multiple 
of canonical divisors $K_{X_i}$ and $K_{X_j}$. In particular, 
there is an automorphism $\delta$ on $\PP^1$ such that 
$\delta\circ f_i=f_j\circ \varphi$.
Moreover we can see that the relative Picard numbers 
$\rho(X_i/\PP ^1)$ and $\rho (X_j/\PP ^1)$ are $2$. 
Thus $f_i$ (resp. $f_j$) factors through only in two ways;
$f_i$ (resp. $f_j$) factors through $S_i$ (resp. $S_j$) or $S$.
This is absurd by $S_i\not\cong S_j$.  

Next we show that $X_i$ has no small contractions for all $i$.
Suppose that $X_i$ has a small contraction contracting a 
curve $C$. Since $K_{X_j}\cdot C=0$, $C$ is also contracted by 
the Iitaka fibration $f_i$. But this contradicts $\rho (X_i/\PP ^1)=2$.

If minimal 3-folds $X_i$ and $X_j$ are birational, 
they are connected by a sequence of flops.
But it is impossible by the facts proved above.

\end{proof}

%%%
%%%
%%% Hodge isometry

\begin{lem}\label{lem:hodge}
There are Hodge isometries
$$
(H^3(X_i,\Z)_{\text{free}},Q_{X_i})\cong (H^3(X_j,\Z)_{\text{free}},Q_{X_j})
$$
for all $i,j$,
where the polarizations are given by the intersection forms.
\end{lem}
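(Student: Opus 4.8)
The plan is to produce the Hodge isometry directly from the Fourier--Mukai equivalence supplied by Lemma \ref{lem:product}, using the fact that a Fourier--Mukai transform induces a Hodge isometry on the total cohomology and then isolating the $H^3$ component. Recall that each equivalence $\Phi^{\mc U}\colon D^b(X_i)\to D^b(X_j)$ has an associated cohomological transform $\Phi^{\mc U}_H\colon H^*(X_i,\Q)\to H^*(X_j,\Q)$, defined via the Mukai vector $v(\mc U)=\ch(\mc U)\sqrt{\td}$ of its kernel, which is an isomorphism of rational Hodge structures compatible (up to sign twists by $\td$) with the Euler pairing. So the first step is to invoke this standard machinery to get a Hodge isomorphism $H^*(X_i,\Q)\cong H^*(X_j,\Q)$.

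\emph{Next} I would argue that this transform respects the grading finely enough to restrict to $H^3$. The cohomological Fourier--Mukai transform need not preserve degree in general, but the \emph{off-diagonal} components of $\ch(\mc U)$ govern how cohomology is shuffled between degrees. Here the kernel $\mc P\boxtimes\mc Q$ is built from the rank-$2$ universal-type bundles $\mc P_0$ of Lemma \ref{lem:rank2}, and these are supported (fiberwise) so that the induced correspondence on $H^3$ is governed by the correspondence on the surface factors. I would examine the Künneth decomposition of $H^3(X_i,\Q)$; because $h^1(\mc O_{X_i})=0$ and by the explicit Hodge diamond from Lemma \ref{lem:invariant}, the group $H^3(X_i,\Q)$ sits in a controlled position, and the odd-degree part of the cohomological transform must send $H^3(X_i)$ isomorphically to $H^3(X_j)$ as Hodge structures, since there is no other odd cohomology for it to mix with beyond $H^1=H^5=0$. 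Compatibility with the cup-product/intersection form $Q$ then follows from the fact that the cohomological transform is an isometry for the Mukai pairing and that, on the middle odd cohomology of a threefold, the Mukai pairing restricts (up to an overall sign determined by $\td$, which is trivial in the relevant degree) to the intersection pairing $Q_{X_i}$.

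\emph{Finally}, to upgrade from a \emph{rational} to an \emph{integral} Hodge isometry on $H^3(X_i,\Z)_{\text{free}}$, I would check that the correspondence defined by the algebraic cycle class of the kernel carries the integral lattice into the integral lattice in both directions; since $\mc P\boxtimes\mc Q$ is a genuine sheaf (hence its Chern character lies in $H^*(-,\Q)$ but the induced correspondence $H^3(X_i,\Z)\to H^3(X_j,\Z)$ is given by an integral algebraic cycle and is invertible with integral inverse coming from the quasi-inverse functor), the map preserves the integral lattice modulo torsion. Hence it descends to an isometry of $\bigl(H^3(X_i,\Z)_{\text{free}},Q_{X_i}\bigr)$ with $\bigl(H^3(X_j,\Z)_{\text{free}},Q_{X_j}\bigr)$.

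\textbf{Main obstacle.} The delicate point is the degree bookkeeping in the second step: showing that the cohomological Fourier--Mukai transform restricts cleanly to an \emph{isomorphism on the $H^3$ piece alone}, rather than merely giving an isometry of the full cohomology. One must rule out the transform mixing $H^3$ with the even-degree classes in a way that would spoil the isometry after restriction. I expect this to follow from the vanishing $h^1(\mc O_{X_i})=h^2(\mc O_{X_i})=0$ together with the structure of the kernel, which forces the odd and even parts of the correspondence to decouple, but making this rigorous — rather than appealing to the total isometry — is where the real work lies.
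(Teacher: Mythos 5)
Your Steps 1 and 2 are essentially the paper's Steps 1 and 2: the cohomological Fourier--Mukai transform preserves odd cohomology, and since $h^1(\mc O)=h^2(\mc O)=0$ it restricts to a rational Hodge isometry on $H^3$ compatible with the intersection forms. That part is fine --- but note it is the \emph{easy} part (it is exactly \cite[Proposition 3.1]{Ca07}, which never needs the Calabi--Yau condition), whereas you single it out as the ``main obstacle.'' The genuine gap is your final step, the passage from $\Q$ to $\Z$. The map on $H^3$ is \emph{not} induced by ``the integral algebraic cycle class of the kernel''; it is induced by the $(3,3)$-K\"unneth component of the Mukai vector $\ch(\mc U)\sqrt{\td(X_i\times X_j)}$, and the Chern character carries denominators. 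Using $H^1=0$ one computes
$$
\bigl(\ch(\mc U)\sqrt{\td (X_i\times X_j)}\bigr)^{3,3}
=(\ch (\mc U))^{3,3}
=\tfrac{1}{2}\,c_3(\mc U)^{3,3},
$$
so the correspondence acting on $H^3$ is a priori only a class in $H^6(X_i\times X_j,\Q)$, with a genuine factor $\frac{1}{2}$. This factor is precisely why C\u{a}ld\u{a}raru's general theorem produces an isometry only after inverting $2$, and why his integral statement (\cite[Proposition 3.4]{Ca07}) requires additional hypotheses. Your parenthetical ``is given by an integral algebraic cycle and is invertible with integral inverse'' asserts exactly what has to be proved; nothing about the kernel being an honest sheaf makes the class $\tfrac{1}{2}c_3(\mc U)^{3,3}$ integral.

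The paper closes this gap by a specific choice of kernel, and this is the real content of its proof. Put $Z:=X_i\times_{\PP^1}X_j$ with embedding $\iota\colon Z\hookrightarrow X_i\times X_j$, and take $\mc U:=\iota_*\mc U_0$ with $\mc U_0:=\mc P_0\boxtimes\mc O_{\Delta_S}$, where $\mc P_0$ is the \emph{rank-two} bundle of Lemma \ref{lem:rank2} (this is the reason that lemma insists on rank $2$). Then $c_1(\mc U_0)=2\tilde{\pi}^*c_1(\mc O_{\Delta_S})$ is divisible by $2$, where $\tilde{\pi}\colon Z\to S\times_{\PP^1}S$ is the projection, and the Grothendieck--Riemann--Roch theorem for $\iota$ (whose normal bundle is $g^*\mc O_{\PP^1}(2)$ for $g=f_i\times f_j$) gives
$$
\tfrac{1}{2}c_3(\mc U)^{3,3}
=\Bigl(\iota_*\bigl(\tfrac{1}{2}c_1(\mc U_0)^2-c_2(\mc U_0)
-c_1(\mc U_0)\,g^*c_1(\mc O_{\PP^1}(1))\bigr)\Bigr)^{3,3},
$$
which is integral because $\tfrac{1}{2}c_1(\mc U_0)^2=2\bigl(\tilde{\pi}^*c_1(\mc O_{\Delta_S})\bigr)^2$. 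Without an argument of this kind --- some mechanism controlling the $2$-divisibility of $c_3(\mc U)^{3,3}$ for a suitably chosen kernel --- your proof establishes the isometry only over $\Z[\frac{1}{2}]$, not over $\Z$.
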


\begin{proof} 
Let $X$ and $Y$ be derived equivalent Calabi--Yau $3$-folds.
C\u{a}ld\u{a}raru shows the existence of Hodge isometries 
 
\begin{itemize}
\item between the free parts of $H^3(X,\Z[\frac{1}{2}])$
and $H^3(Y,\Z[\frac{1}{2}])$ (\cite[Proposition 3.1]{Ca07}), 
and 
\item between the free parts of $H^3(X,\Z)$
and $H^3(Y,\Z)$ under some additional assumptions 
(\cite[Proposition 3.4.]{Ca07}).
\end{itemize}

To get the conclusion as desired, 
we use and modify his argument in \cite{Ca07}.
Unlike $3$-folds $X,Y$ there, 
our $3$-folds $X_i$'s are not Calabi-Yau's,
and so $c_1(X_i)$ survives.
We have to take care of it.

First we put $X=X_i$ and $Y=X_j$ to adapt our notations with 
the one in \cite{Ca07}.

\emph{Step 1.}
As is well-known, a Fourier--Mukai transform 
$\fm{\mc U}\colon D^b(X) \to D^b(Y)$ with the kernel $\mc U\in D^b(X\times Y)$
induces an isometry
$$
\varphi:=\pi_{Y*}(\pi_X^*(-)\ch(\mc U)\sqrt{\td (X\times Y)})
 \colon H^*(X,\C)\to H^*(Y,\C).
$$
More precisely, $\varphi$ preserves the odd cohomologies 
and the Hochshild graded pieces
$\bigoplus _{q-p=k}H^{p,q}(X)$ and $\bigoplus _{q-p=k}H^{p,q}(Y)$ for all $k\in \Z$, 
which yields, as in the proof of 
\cite[Proposition 3.1]{Ca07}, that $\varphi$ restricts to 
a Hodge isometry 
$$
\varphi|_{H^3(X,\C)}\colon H^3(X,\C)\to H^3(Y,\C),
$$
preserving the intersection forms.
Indeed, in order to show this fact in \cite[ibid.]{Ca07}, 
$X$ and $Y$ are not needed 
to be Calabi-Yau's but they satisfy the equations $h^1(\mc O)=h^2(\mc O)=0$, 
which are true in our situation.

\emph{Step 2.}
Below we denote by $\alpha^{s,t}$ 
the $(s,t)$-K\"unneth components in $H^s(X,\Q)\otimes H^t(Y,\Q)$ of 
$\alpha\in H^{s+t}(X\times Y,\Q)$. 
By the argument in \cite{Ca07},
we know that 
\begin{equation*}
\varphi|_{H^3(X,\C)}(-)
=\pi_{Y*}(\pi_X^*(-)(\ch (\mc U)\sqrt{\td (X\times Y)})^{3,3}).
\end{equation*}
So we want to get more information of  
$
(\ch (\mc U)\sqrt{\td (X\times Y)})^{3,3}
$ 
for our purpose.

We have 
\begin{align*}
\sqrt{\td (X\times Y)}=
&1+\frac{1}{4}c_1(X\times Y)+(\frac{1}{96}c_1(X\times Y)^2+\frac{1}{24}c_2(X\times Y))\\
&+\frac{1}{96}c_1(X\times Y)c_2(X\times Y)+\text{ higher order terms}.
\end{align*}
Hence there are no $(3,3),(3,0),(0,3)$-components
in $\sqrt{\td (X\times Y)}$. 
In addition, because $H^1(X,\Q)=H^1(Y,\Q)=0$
we have 
$$
(\ch (\mc U)\sqrt{\td (X\times Y)})^{3,3}= 
(\ch (\mc U))^{3,3}(\sqrt{\td (X\times Y)})^{0,0}
$$
and 
\begin{equation}\label{eqn:33}
(\ch(\mc U))^{3,3}=\frac{1}{6}(c_1(\mc U)^3-3c_1(\mc U)c_2(\mc U)+3c_3(\mc U))^{3,3}
=\frac{1}{2}{c_3(\mc U)}^{3,3}.
\end{equation}
In particular, we obtain
\begin{equation}\label{eqn:H^3}
\varphi|_{H^3(X,\C)}(-)
=\frac{1}{2}\pi_{Y*}(\pi_X^*(-){c_3(\mc U)}^{3,3}).
\end{equation}

\emph{Step 3.}
In this step, we show that 
$\frac{1}{2}{c_3(\mc U)}^{3,3}\in H^6(X\times Y, \Z)$ if we choose an object
$\mc U$ appropriately. 
With (\ref{eqn:H^3}), this yields the conclusion.
This time, we use ideas in the proof of \cite[Proposition 3.4]{Ca07}.  

Put $Z=X\times_{\PP^1} Y$ and denote 
the closed embedding by
$\iota\colon Z\hookrightarrow X\times Y$.
We choose a vector bundle $\mc P_0$ on 
$S_i\times_{\PP ^1} S_j$ with  $\rk\mc P_0=2$ as in  Lemma \ref{lem:rank2}. 
Define a sheaf $\mc U_0$ on $Z$ as  
$\mc U_0:=\mc P_0\boxtimes \mc O_{\Delta_{S}}.$
Hence 
$$
c_1(\mc U_0)=\rk(\mc P_0)\tilde{\pi}^*c_1(\mc O_{\Delta_{S}})
=2\tilde{\pi}^*c_1(\mc O_{\Delta_{S}}),
$$ 
where
$\tilde{\pi}\colon Z\to S\times_{\PP ^1}S$ is the projection.
Then as in the proof of  Lemma \ref{lem:product},
the object $\mc U:=\iota_* \mc U_0$ gives the kernel 
of the Fourier--Mukai transform $\fm{\mc U}$. 
By the Grothendieck--Riemann--Roch theorem, we have 
\begin{align}\label{al:Grot}
\ch (\mc U)=\iota_* (\ch (\mc U_0)\td (\mc N_{Z/X\times Y})^{-1})
\end{align}  
(cf. \cite[pp. 283]{Fu98}). 

Define $g:=f_i\times f_j \colon X\times Y \to \PP ^1\times \PP ^1$.
Then we have $g^*\Delta_{\PP ^1} =Z$
and hence  
$$
\mc N_{Z/X\times Y}=g^*\mc N_{\Delta /\PP ^1\times \PP ^1}
=g^*\mc O_{\PP  ^1}(2).
$$
In particular, 
$
\td (\mc N_{Z/X\times Y})^{-1}=1-g^*c_1(\mc O_{\PP^1}(1)).
$
%where $H$ is the class  in $H^2(\PP^1,\Z)$ corresponding to a point in $\PP^1$.

Taking the $(3,3)$-components of both sides of (\ref{al:Grot}), 
we obtain from (\ref{eqn:33})  
\begin{align*}
&\frac{1}{2}{c_3(\mc U)}^{3,3}
=(\iota_* (\ch (\mc U_0)\td (\mc N_{Z/X\times Y})^{-1}))^{3,3}\\
=&(\iota_* ((r(\mc U_0)+c_1(\mc U_0)+\frac{1}{2}c_1(\mc U_0)^2-c_2(\mc U_0)
+\text{h.o.t.})
(1-g^*c_1(\mc O_{\PP^1}(1)))))^{3,3} \\
=&(\iota_* (\frac{1}{2}c_1(\mc U_0)^2-c_2(\mc U_0)-c_1(\mc U_0)g^*c_1(\mc O_{\PP^1}(1))))^{3,3}.
\end{align*}
Therefore we obtain 
$\frac{1}{2}{c_3(\mc U)}^{3,3}\in H^6(X\times Y, \Z)$. 
\end{proof}

\noindent
\emph{Proof of Theorem \ref{thm:main20}.}
Combining Lemmas  \ref{lem:invariant}, \ref{lem:birational} and
\ref{lem:hodge}, we obtain Theorem \ref{thm:main20}.
\qed

\begin{remark}\label{rem:Chakiris}
In \cite{Ch80}, Chakiris gives  a sketchy proof of the following
result:
Let $S$ be a simply connected, minimal elliptic surface with 
$p_g(S)\ne 0$, having one or at most two multiple fibers.  
Then the period map has a positive dimensional fiber at the point 
corresponding to the surface $S$. 

Because it is known that the Fourier--Mukai number $|\FM (S)|$ is finite (\cite{BM01}),
general elements in the fiber do not have equivalent derived categories.
I am not sure whether we can show Lemma \ref{lem:hodge} 
without using derived equivalence. 
\end{remark}

\begin{remark}
Let $X$ and $Y$ be birationally equivalent smooth minimal 
 $3$-folds. Bridgeland theorem \cite{Br02} says that 
 $X$ and $Y$ are derived equivalent. 
 Assume furthermore  that 
 $h^1(\mc O_X)=h^1(\mc O_Y)= 0$.
Then Koll\'ar \cite{Ko89} proves that
there is a rational polarized Hodge isometry
$$
(H^3(X,\Q),Q_{X})\cong (H^3(Y,\Q),Q_{Y}).
$$  
Hence 
the non-birationality (Lemma \ref{lem:birational})
 makes Theorem \ref{thm:main20} novel.
\end{remark}

%%%%%%%%%%%%%%%%%%%%%%%%%%%%%%%%%%%%%%%%%%%%%%%%%%%%%%%%%%%%%%%%%%%%%%%%%%%%%%%

\indent

\medskip
\noindent 
\textsc{Department of Mathematics
and Information Sciences,\\
Tokyo Metropolitan University,\\
1-1 Minamiohsawa,
Hachioji-shi,
Tokyo,
192-0397,
Japan \\
E-mail:}
\texttt{hokuto@tmu.ac.jp}

\end{document}